\documentclass[oneside,english]{amsart}
\usepackage[T1]{fontenc}
\usepackage[latin9]{inputenc}
\pagestyle{plain}
\usepackage{color}
\usepackage{babel}
\usepackage{amsthm}
\usepackage{amssymb}
\usepackage{cancel}
\usepackage{graphicx}
\usepackage[unicode=true,
 bookmarks=false,
 breaklinks=false,pdfborder={0 0 1},backref=section,colorlinks=false]
 {hyperref}

\makeatletter
\numberwithin{equation}{section}
\numberwithin{figure}{section}

\usepackage{amsfonts}
\usepackage{amsfonts}\usepackage{amsthm}\usepackage{enumitem}\usepackage{bm}\usepackage{thmtools}
\setcounter{MaxMatrixCols}{10}
\graphicspath{ {./images/} }

\def\theenumi{\arabic{enumi}}

\def\theenumii{\alph{enumii}}
\def\p@enumii{\theenumi.}

\def\theenumiii{\arabic{enumiii}}
\def\p@enumiii{(\theenumi)(\theenumii)}

\def\p@enumiv{\p@enumiii.\theenumiii}

\newtheorem{theorem}{Theorem}[section]
\newtheorem{corollary}[theorem]{Corollary}\newtheorem{lemma}[theorem]{Lemma}\newtheorem{question}[theorem]{Question}\newtheorem{proposition}[theorem]{Proposition}

\theoremstyle{definition}
\newtheorem{remark}[theorem]{Remark}\newtheorem{example}[theorem]{Example}

\usepackage{babel}

\makeatother

\begin{document}
\title{Law of large numbers for the drift of two-dimensional wreath product}
\author{Anna Erschler and Tianyi Zheng}
\begin{abstract}
We prove the law of large numbers for the drift of random walks on
the two-dimensional lamplighter group, under the assumption that the
random walk has finite $(2+\epsilon)$-moment. This result is in contrast
with classical examples of abelian groups, where the displacement
after $n$ steps, normalised by its mean, does not concentrate, and
the limiting distribution of the normalised $n$-step displacement
admits a density whose support is $[0,\infty)$. We study further
examples of groups, some with random walks satisfying 
LLN for drift and other examples where such concentration phenomenon does not hold, and study relation of this property with asymptotic
geometry of groups. 
\end{abstract}

\thanks{This project has received funding from the European Research Council (ERC) under
the European Union's Horizon 2020 
research and innovation program (grant agreement
No.725773)
The
first named author also thanks the support of the ANR grant MALIN}
\date{November 11 2020}

\maketitle

\section{Introduction}

In this paper we study the limiting behavior of the distance to the
origin of random walks on non-abelian solvable groups, in particular
show a law of large numbers type result for random walks of sublinear
drift on wreath products over $\mathbb{Z}^{2}$ with finite or infinite
lamp groups. This is in contrast with classical central limit theorems
for random walks on $\mathbb{Z}^{d}$, see e.g., \cite[Chapter VIII.4]{Feller}.

Let $G$ be a finitely generated group and $S$ be a finite generating
set of $G$. Denote by $l_{S}$ the word length function on $G$ with
respect to the generating set $S$. Let $\mu$ be a probability measure
on $G$; consider a random walk $\left(X_{n}\right)_{n=0}^{\infty}$
on $G$ with step distribution $\mu$. The \emph{drift function} $L_{\mu}(n)$
is defined as the mean of $l_{S}(X_{n}),$ that is, 
\[
L_{\mu}(n)=\mathbb{E}\left[l_{S}(X_{n})\right].
\]
If $L_{\mu}(n)$ grows linearly, then it is well-known that by Kingman's
subadditive ergodic theorem $l_{S}(X_{n})/n$ tends to a positive
constant almost surely. If $L_{\mu}(n)$ is sublinear, then in general
the sequence of normalized random variables $l_{S}(X_{n})/L_{\mu}(n)$
does not necessarily converge in distribution. When $L_{\mu}(n)\simeq\sqrt{n},$
we say the $\mu$-random walk is diffusive. In contrast to abelian
groups, where symmetric simple random walks have diffusive behavior,
there is a rich spectrum of behavior of $L_{\mu}(n)$ for symmetric
random walks on non-abelian groups: for any subadditive function $f$
between $\sqrt{n}$ and $n$ satisfying certain regularity conditions,
there exists a group $G$ and a symmetric measure $\mu$ on $G$ with
finite generating support such that $L_{\mu}(n)$ is equivalent to
$f$, see \cite{amirvirag,brieusselzheng}.

In many examples, the limiting distribution of $l_{S}(X_{n})/L_{\mu}(n)$
exists and admits a density on $\mathbb{R}_{+}$. For example, this
is the case when $G$ is abelian and $\mu$ is a centered measure
with finite generating support. This fact can be deduced from the
local central limit theorem for random walks on $\mathbb{Z}^{d}$,
see e.g. \cite[Chapter 2.1]{LawlerLimic}.

When $l_{S}(X_{n})/L_{\mu}(n)$ converges to a constant $c>0$ almost
surely, we say that the distance to the origin $l_{S}(X_{n})$ of
the random walk obeys a law of large numbers. To our knowledge,  it has not been studied  up to now when such  almost sure convergence
of $l_{S}(X_{n})/L_{\mu}(n)$ to a constant occurs for random walks
with sublinear drift function $L_{\mu}(n)$. In this paper, we show
a law of large numbers for the displacement of a centered random walk
on the wreath product $\mathbb{\mathbb{Z}}^{2}\wr(\mathbb{Z}/2\mathbb{Z})$.
Recall that the wreath product $H\wr L$ of groups $H$ and $L$ is
a semi-direct product $\left(\oplus_{H}L\right)\rtimes H$, where
$H$ acts by translation on $\oplus_{H}L$. The group $H\wr(\mathbb{Z}/2\mathbb{Z})$
is sometimes called the lamplighter group over $H$ (with the lamp
group $\mathbb{Z}/2\mathbb{Z}$). We will recall this well-known interpretation
as lamplighter in "plan of the proof" section at the end of this
introduction.

The study of random walks on wreath products is initiated by Kaimanovich
and Vershik, \cite{kaimanovichvershik}, who have shown that wreath
products illustrate several phenomena about Poisson boundary on random
walks on groups. One and two dimensional wreath products $\mathbb{Z}\wr\mathbb{Z}/2\mathbb{Z}$
and $\mathbb{Z}^{2}\wr\mathbb{Z}/2\mathbb{Z}$ are examples of groups
of exponential growth where the Poisson boundary of simple random
walks is trivial; $\mathbb{Z}^{d}\wr\mathbb{Z}/2\mathbb{Z}$, $d\ge3$,
are amenable groups with non-trivial boundary of simple random walks;
$G=\mathbb{Z}\wr\mathbb{Z}/2\mathbb{Z}$ admits (infinite entropy)
measures $\mu$ such that the boundary of $(G,\mu)$ is non-trivial
and the boundary defined by an inverse measure $(G,\hat{\mu})$ is
trivial. The study of Poisson boundary on wreath products was continued
in a series of works, among which we mention a recent result of Lyons
and Peres, \cite{lyonsperes}, which provide a complete description
of the Poisson boundary of simple random walks on $\mathbb{Z}^{d}\wr\mathbb{Z}/2\mathbb{Z}$,
$d\ge3$. Many other questions about random walks on wreath products
were studied. Here is a non-exhaustive list of problems: return probability
on wreath products was studied in papers of Varopoulos \cite{varopoulos},
Pittet, Saloff-Coste \cite{pittetsaloffcoste} and Revelle \cite{revelle1},
Law of iterated logarithm was studied in \cite{revelle2},
positive harmonic functions and Martin boundary by Brofferio, Woess
\cite{brofferio,brofferiowoess}, minimal growth of (not necessary
bounded and not necessary positive) harmonic function by Benjamini et al \cite{benjaminietal}, recurrent subsets
(and instability of recurrence of subsets and Green kernel) by Benjamini
and Revelle \cite{bejnaminirevelle}. Our main interest here are infinite
wreath products, but we mention that random walks on finite wreath
products also provide many interesting examples for the study of random
walks on finite graphs, see for instance H\"{a}ggstr\"{o}m and Jonasson \cite{haggsromjonasson},
Peres, Revelle \cite{peresrevelle} and Komj\'{a}thy, Peres \cite{komjathyperes}
for the study of mixing and relaxation time for random walks on these
groups; and Miller, Sousi \cite{millersousi}, Dembo et al \cite{dembodingmillerperes}
for the study of late points of random walks. In papers above and
in our work, we study Markov kernels invariant by group actions. 
The work of Lyons, Pemantle and Peres \cite{lyonspemantle} about \textquotedbl homesick\textquotedbl{}
random walks on these groups do not satisfy this condition: a somehow
counterintuitive example when inward based random walks move quicker
from the origin than simple random walks. Finally, we always discuss
here wreath products for the action of the group on itself and we
do not mention several recent works about random walks on permutational
extensions.

We say that a probability measure $\mu$ on a group $G$ is \emph{centered}
if $\mu$ has finite first moment and $\sum_{g\in G}\chi(g)\mu(g)=0$
for any homomorphism $\chi:G\to(\mathbb{R},+)$. We say that a measure
$\mu$ on $G$ is non-degenerate if its support generates $G$ as
a semigroup. The measure $\mu$ has \emph{finite $\alpha$-moment}
if $\sum_{g\in G}l_{S}(g)^{\alpha}\mu(g)<\infty$. It is clear that
this condition does not depend on the choice of the generating set
$S$. Now we formulate our main result for the displacement of random
walk on two-dimensional lamplighter groups.

\begin{theorem}\label{main}

Let $G=\mathbb{\mathbb{Z}}^{2}\wr(\mathbb{Z}/2\mathbb{Z})$ and $\mu$
be a centered non-degenerate probability measure of finite $(2+\epsilon)$-moment
on $G$, $\epsilon>0$. Let $S$ be a finite generating set of $G$.
Then the $\mu$-random walk $(X_{n})_{n=0}^{\infty}$ on $G$ satisfies
\[
\lim_{n\to\infty}\frac{l_{S}(X_{n})}{n/\log n}=c\mbox{ a.s.}
\]
for some positive constant $c$.

\end{theorem}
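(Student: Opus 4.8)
The plan is to transfer the problem to the base group $\mathbb{Z}^2$ and to reduce the word length to a self-averaging functional of the projected trajectory. Write $\pi\colon G\to\mathbb{Z}^2$ for the projection onto the base, put $\bar X_n=\pi(X_n)$, and let $R_n=\#\{\bar X_0,\dots,\bar X_n\}$ be the range. Since every homomorphism $\mathbb{Z}^2\to\mathbb{R}$ lifts to $G$, the pushforward $\bar\mu=\pi_*\mu$ is centered, and it inherits a finite second moment from $\mu$; hence $\bar X_n$ is a centered, diffusive walk that is genuinely two-dimensional (the support of $\bar\mu$ generates $\mathbb{Z}^2$ by non-degeneracy of $\mu$), for which the classical range asymptotics $\mathbb{E}[R_n]\sim\rho\,n/\log n$ and the strong law $R_n/\mathbb{E}[R_n]\to1$ almost surely (Dvoretzky--Erd\H{o}s, Jain--Pruitt, Le Gall) are available. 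The first technical step is to record the description of the word metric on the wreath product: up to a bi-Lipschitz factor depending only on $S$, the length $l_S(\Phi_n,\bar X_n)$ equals the number of lit lamps $\#\,\mathrm{supp}(\Phi_n)$ plus the length of a shortest path in $\mathbb{Z}^2$ from $0$ to $\bar X_n$ that visits $\mathrm{supp}(\Phi_n)$. As $\mathrm{supp}(\Phi_n)$ sits within bounded distance of the coarsely connected range, both contributions are of order $R_n$, which already yields the a priori bound $l_S(X_n)\asymp R_n\asymp n/\log n$.

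The core of the argument is to promote this comparison to an honest limit, for which I would run the standard law-of-large-numbers architecture: a mean asymptotic together with concentration. For the mean, the goal is $\mathbb{E}[l_S(X_n)]=(\kappa+o(1))\,\mathbb{E}[R_n]$ for a deterministic $\kappa>0$; this requires showing that both the expected number of lit lamps and the expected travelling-salesman cost over the random range are asymptotically proportional to $R_n$, i.e.\ that they self-average. Here I would exploit that a fixed site of the range is, by a renewal/ergodic argument, lit with an asymptotically constant probability, and that the minimal tour cost of the range has a well-defined cost per visited site. For the concentration I would view $l_S(X_n)$ as a function of the i.i.d.\ increments and apply an Efron--Stein / martingale-difference variance estimate: resampling one increment translates the tail of the trajectory and alters a single lamp increment, changing the word length by an amount controlled in $L^2$ by the size of that increment. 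Summing these contributions and using the finite $(2+\epsilon)$-moment (which makes the large jumps negligible and the variance terms summable) gives $\mathrm{Var}(l_S(X_n))=o\big((n/\log n)^2\big)$, hence $l_S(X_n)/\mathbb{E}[l_S(X_n)]\to1$ in probability.

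To upgrade convergence in probability to almost sure convergence, I would combine the variance bound with a Borel--Cantelli argument along a slowly growing subsequence $n_k$ (with $n_{k+1}/n_k\to1$), the $(2+\epsilon)$-moment again providing the summability needed; the gaps are then filled using the monotonicity of the range $R_{n_k}\le R_n\le R_{n_{k+1}}$ together with the established proportionality of $l_S(X_n)$ to $R_n$. Putting the pieces together, $l_S(X_n)/(n/\log n)\to\kappa\rho=:c$ almost surely.

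I expect the main obstacle to be the asymptotically \emph{exact} (not merely coarse) identification of the leading term, namely the proof that the travelling-salesman cost of the random range and the count of lit lamps are each $(1+o(1))$ times a deterministic multiple of $R_n$. Unlike the crude two-sided bound, this demands genuine self-averaging of geometric functionals of the range of a two-dimensional walk, and it must be carried out for a general centered step distribution with unbounded support and arbitrary (non-toggle) lamp increments, rather than for the simple random walk. Controlling the resulting error terms uniformly enough to produce a true limit, and not just convergence within a band of constants, is the delicate point on which the whole law of large numbers rests.
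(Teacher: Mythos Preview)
Your outline correctly identifies the reduction to the range of the projected walk and the two-term decomposition of the word length (lit lamps plus a travelling-salesman cost). However, there are two places where the argument as written would not go through, and the paper's proof handles both quite differently.

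\textbf{The concentration step.} The Efron--Stein estimate you sketch does not give what you claim. Resampling the $i$-th increment translates the entire tail $\bar X_i,\dots,\bar X_n$ by a vector $v$, and this does \emph{not} change $l_S(X_n)$ by an amount controlled in $L^2$ by $|v|$. The support of $\Phi_n$ is a union of two pieces (the first $i$ steps and the remaining $n-i$ steps), and translating the second piece alters the overlap with the first in a way that can change both the cardinality of the range and the TSP cost by an amount of order $R_n$ itself, even when $|v|=O(1)$. For the range $R_n$ the required variance bound is known (Le Gall), but it is proved by a second-moment computation specific to the range, not by Efron--Stein; for the TSP functional there is no such result available, and your bounded-difference heuristic fails.

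\textbf{The exact constant and the generating set.} Passing to a bi-Lipschitz equivalent ``standard'' length destroys the constant $c$, which depends on $S$. The paper does not separate mean and concentration at all. Instead it subdivides $\mathbb{Z}^2$ into boxes of slowly growing side $c_n$ and shows that almost every box in the range is visited at least $q$ times (this is where Flatto's $(2+\epsilon)$-moment result is used), so that the lamp configuration in such a box can be coupled, up to a set of density $o(1)$, with an i.i.d.\ uniform configuration. The LLN then comes from a law of large numbers \emph{over boxes}, using a deterministic limit $\alpha_{p,S}=\lim_n \mathbb{E}[\ell^{(G,S)}_{\mathrm{TS}}(D_n)]/n^2$ for the $S$-TSP of a diluted square. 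The upper bound is routine, but the lower bound requires the key geometric ingredient you are missing: an \emph{uncrossing lemma} showing that the restriction of a global shortest $S$-path to a box can be rewired, at cost $O(|\partial\alpha|)$, into a single $S$-path producing the configuration in that box. Without this, there is no reason the sum of local TSP costs bounds the global TSP from below. Your ``self-averaging of the cost per visited site'' is precisely this statement, and it needs the uncrossing argument (together with the F{\o}lner property of the range from Deligiannidis--Gou\"ezel--Kosloff) to be proved.
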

We remark that without the second moment condition the normalization can be different from $n/\log n$.
For instance, there are symmetric random walks with finite first moment on $\mathbb{\mathbb{Z}}^{2}\wr(\mathbb{Z}/2\mathbb{Z})$
which have linear speed. 

We also prove that the law of large numbers for random walk displacement
holds in some examples with infinite lamp groups, for instance on
the wreath product $\mathbb{Z}^{2}\wr\mathbb{Z}$. In that case the
argument relies on a result of \v{C}ern\'{y} \cite{cerny}, see Proposition
\ref{2-dim Z-lamp}. Our main goal is to treat the case where the
lamp group is finite as in Theorem \ref{main}.

Now we comment on the situation in contrast with the phenomenon described
in Theorem \ref{main}. By Gromov's polynomial growth theorem \cite{GromovPoly},
groups of polynomial growth are virtually nilpotent. In Alexopoulos
\cite{alexopoulos}, a local central limit theorem is established
for centered finite range random walks on groups of polynomial growth.
As a consequence, for centered finite range random walks on polynomial
growth groups, $l_{S}(X_{n})/\sqrt{n}$ converges in distribution
and the limiting distribution admits a positive density on the ray
$(0,\infty).$ Beyond groups of polynomial growth, such convergence
in distribution can also be observed in some examples of groups of
exponential growth. In Proposition \ref{0-1 lamp} we compute the
limiting distribution of $l_{S}(X_{n})/\sqrt{n}$ for symmetric random
walks of finite second moment on $\mathbb{Z}\wr(\mathbb{Z}/2\mathbb{Z})$.
Note that in the examples mentioned above, the random walks exhibit
diffusive behavior (for an example with non-diffusive drift function
where the limiting distribution of $l_{S}(X_{n})/L_{\mu}(n)$ can
be computed see Example \ref{1-dim infinite lamp}), it is natural
to ask in general the following question.

\begin{question}\label{question}

If the random walk on $G$ is diffusive, that is $L_{\mu}(n)\simeq\sqrt{n},$
is it true that $l_{S}(X_{n})/L_{\mu}(n)$ converges in distribution
to a limiting law whose density charges the whole ray $(0,\infty)$?

\end{question}

It is known that the rate of escape of random walks on infinite groups
always satisfy a diffusive lower bound, see \cite{LeePeres}. In Section
\ref{sec:questions} we formulate and discuss this question under
the stronger assumption that the group $G$ admits what is called
controlled F{\o}lner pairs (see Question \ref{controlled-density}).
We provide some evidence supporting a positive answer. As mentioned
earlier, Proposition \ref{0-1 lamp} describes the limiting distribution
of $l_{S}(X_{n})/L_{\mu}(n)$ for simple random walks on the lamplighter
$\mathbb{Z}\wr F$ over $\mathbb{Z}$ with finite lamp group $F$.
As another evidence, in Lemma \ref{cautious-density}, we show that
if $G$ admits a sequence of controlled F{\o}lner pairs and limiting
density of $l_{S}(X_{n})/L_{\mu}(n)$ exists, then the support of
the limiting density must be the whole ray $(0,\infty).$ The definition
of controlled F{\o}lner pairs is recalled in Section \ref{sec:questions}.
If instead of a sequence of controlled F{\o}lner pairs, we assume
a weakened condition that $G$ admits controlled F{\o}lner pairs
on some scales, then the situation is more complicated. We illustrate this by 
examples of lacunary hyperbolic groups which admit controlled F{\o}lner
pairs on some scales, where the limiting behavior of $l_{S}(X_{n_{i}})/L_{\mu}(n_{i})$
depends on the choice of $(n_{i})$, see Proposition \ref{lacunary}.

We mention that in certain classes of groups where $L_{\mu}(n)$
grows linearly, central limit theorems for $l_{S}(X_{n})-L_{\mu}(n)$
are established: see for example the central limit theorem by Benoist
and Quint on hyperbolic groups \cite{BenoistQuint2016}; and more
generally on acylindrically hyperbolic groups by a different approach
in Mathieu and Sisto \cite{MathieuSisto}. Prior to these works, central
limit theorem for the drift of random walk in the Green metric on
a hyperbolic group is established by Bjorklund in \cite{Bjorklund},
and for quasimorphisms along random walk trajectories in Bjorklund
and Hartnick \cite{BjorklundHartnick}. In Question \ref{question}
one might also ask (similar to the situation of abelian groups where
the classical central limit theorem holds), whether the limiting density
has Gaussian decay at infinity. See more on this in Section \ref{sec:questions}
and in particular a list of questions after Question \ref{controlled-density}.

\subsection*{Plan of the proof of Theorem \ref{main}}

For simple random walk on $\mathbb{Z}^{2}$, a classical result of
Dvoretzky and Erd{\H{o}}s \cite{dvoretzkyerdos} shows that the size
of the range satisfies the strong law of large numbers: 
\begin{equation}
\lim_{n\to\infty}\frac{R_{n}}{\pi n/\log n}=1\mbox{ a.s.,}\label{eq:erdosdovret}
\end{equation}
where $R_{n}$ is the number of vertices visited by the random walk
up to time $n$. This result is generalized in Jain and Pruitt \cite{jainpruitt}
where it is shown that for any recurrent random walk on $\mathbb{Z}^{2}$,
the strong law of large numbers $\lim_{n\to\infty}R_{n}/\mathbb{E}\left[R_{n}\right]=1$
holds almost surely.

We recall that the name lamplighter for wreath product $H\wr\mathbb{Z}/2\mathbb{Z}$
with the two elements group $\mathbb{Z}/2\mathbb{Z}$ comes from the
following observation. The elements of the wreath product are of the
form $(h,f)$, where $h\in H$ and $f:H\to\mathbb{Z}/2\mathbb{Z}$.
One can view the function $f$ as a configuration of \textquotedbl lamps\textquotedbl ,
saying that in each element $h$ of a base group $H$ there is a lamp,
which is on if the value $f(h)$ is equal to $1$ and off if this
value is zero, see Figure \ref{lamplighter3} This interpretation is in particular useful when we
discuss simple random walks on $H\wr\mathbb{Z}/2\mathbb{Z}$: the
random walker walks on $H$, and at the point he visits he can lit
or extinguish the lamp, with some probability specified by the step
distribution.

\begin{figure}
\includegraphics[scale=0.9]{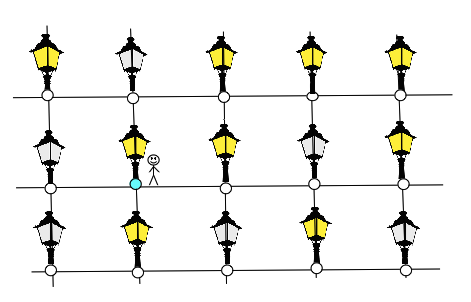}
\caption{Vertices of the lamplighter group $\mathbb{Z}^2 \wr \mathbb{Z}/2\mathbb{Z}$ are pairs: a position of the walker in $\mathbb{Z}^2$ (here shown with a  cyan circle) and a  configuration  of lamps, with finite number of lamps litten. The edges of the graph correspond either to steps of the walker in $\mathbb{Z}^2$ or to switching the lamp in the position of the walker.}
\label{lamplighter3}
\end{figure}

It is well known that the word length in a lamplighter group is closely
related to the Travelling Salesman Problem (TSP) in the base group,
see e.g. \cite{parry}. Therefore it is straightforward that the TSP
arises in the study of the drift function of random walks. Basic estimates
for the TSP can be useful to understand the behavior of random walks
on lamplighter groups, in particular the asymptotics of the drift functions.
We also mention that questions in metric geometry, in particular around
distortions of embeddings of wreath products into Banach spaces, require
deeper understanding of TSP. In Naor and Peres \cite{NaorPeres},
the Jones traveling salesman theorem \cite{jones} is applied to construct
embeddings of lamplighter groups into Banach spaces.

Denote an element $g$ of the lamplighter group $G=\mathbb{Z}^{2}\wr(\mathbb{Z}/2\mathbb{Z})$
by $(x,f)$, where $x\in\mathbb{Z}^{2}$ and $f:\mathbb{Z}^{2}\to\mathbb{Z}/2\mathbb{Z}$
is a function of finite support which we refer to as the lamp configuration
of $g$. 
Consider the standard generating set $S=\left\{ s_{1}=(e_{1},{\bf 0}),s_{2}=(e_{2},{\bf 0}),\delta=(0,\delta_{0}^{1})\right\} $,
where $e_{1}=(1,0)$, $e_{2}=(0,1)$ form the standard basis of $\mathbb{Z}^{2}$,
${\bf 0}$ is the constant function on $\mathbb{Z}^{2}$ that takes
value $0$, and $\delta_{0}^{1}$ is $0$ everywhere in $\mathbb{Z}^{2}$
except that it takes value $1$ at $0\in\mathbb{Z}^{2}$. Given a
finite subset $D\subseteq\mathbb{Z}^{2},$ denote by ${\rm \ell}_{{\rm TS}}(D)$
the length of a shortest path in $\mathbb{Z}^{2}$ (with respect to
the standard generating set $\left\{ e_{1},e_{2}\right\} $) that
visits every point in $D$. One of the usual settings for the TSP
also specifies the beginning and ending points of the path. Here we
do not specify the beginning and ending points (in the situations
we consider the diameter of $D$ is much smaller than $\ell_{{\rm TS}}(D)$).
Then the word length of a group element $g=(x,f)$ satisfies 
\[
{\rm {\rm \ell}_{{\rm TS}}}\left({\rm supp}f\right)+|{\rm supp}f|\le l_{S}(g)\le{\rm {\rm \ell}_{{\rm TS}}}\left({\rm supp}f\right)+|{\rm supp}f|+|x|+2{\rm Diam}({\rm supp}f),
\]
where $\left|X\right|$ is the cardinality of a set $X$, and ${\rm supp}f=\{x\in\mathbb{Z}^{2}:f(x)\neq0\}$. 

Consider the standard switch-walk-switch (SWS) measure $\mu=\eta\ast\nu\ast\eta$
on $\mathbb{Z}^{2}\wr\left(\mathbb{Z}/2\mathbb{Z}\right)$, where
$\eta$ is uniform on $\left\{ id_{G},\delta\right\} $ and $\nu$
is uniform on $\left\{ s_{1}^{\pm1},s_{2}^{\pm1}\right\} $. We refer
to the random walk $\left(X_{n}\right)_{n=0}^{\infty}$ with step
distribution $\mu$ as the standard SWS random walk on $\mathbb{Z}^{2}\wr\left(\mathbb{Z}/2\mathbb{Z}\right)$.
Write $X_{n}=\left(\bar{X}_{n},\Phi_{n}\right)$. Denote by $\mathcal{R}_{n}$
the range of the projected random walk $\left(\bar{X}_{n}\right)_{n=0}^{\infty}$
on $\mathbb{Z}^{2}$ up to time $n$. It is easy to see that $l_{S}(X_{n})$
is equivalent to the size of the range $\mathcal{R}_{n}$, up to a
multiplicative constant. Our goal is to determine the asymptotics
of $l_{S}(X_{n})/(n/\log n)$ when $n\to\infty$. Given a parameter
$p\in(0,1)$, the set obtained by keeping each point in $\mathcal{R}_{n}$
independently with probability $p$ is called the diluted range with
parameter $p$. Then the distribution of ${\rm supp}\Phi_{n}$ of
the standard SWS random walk is the same as the diluted range $\mathcal{R}_{n}$
with parameter $1/2$. Therefore we are led to consider the ${\rm TSP}$
of the diluted range.

Let $F_{n}$ be the length of a shortest path that visits the sites of a diluted
square two-dimensional lattice of side length $n$ with parameter
$p$, then by subadditivity, $\mathbb{E}[F_{n}]/n^{2}$ converges
to a constant $\alpha_{p}$ when $n\to\infty$, see Lemma \ref{dilutedlemma1}.
Indeed, the stronger statement that $F_{n}/n^{2}$ converges to $\alpha_{p}$
almost surely is true: this observation probably goes back to Beardwood,
Halton and Hammersley \cite{BHH}. The TSP on diluted lattice was
considered by Chakrabarti \cite{chakrabarti} and Dhar et al \cite{dharetal}
on the triangular lattice and the square lattice, who estimated the
constant $\alpha_{p}$ in terms of the percolation parameter $p$.

We estimate the length of ${\rm TSP}$ of the diluted range, claiming that this length, normalised by the size of the range, converges almost surely to a positive constant, see Lemma \ref{dilutedlemma2}. To show this claim
we subdivide
$\mathbb{Z}^{2}$ into boxes of certain side length $C$ and inside
each each box use the uncrossing Lemma \ref{uncross}. Part (i) of
Lemma \ref{uncross} is similar the circle freeway lemma used by Lalley
in \cite{lalley} for the TSP of random points on $\mathbb{R}^{2}$
with self-similar distribution; and in part (ii) we estimate the number
of \textquotedbl uncrossings\textquotedbl . To control the error of
approximations, we rely on the F{\o}lner property of the range process
$\left(\mathcal{R}_{n}\right)_{n=1}^{\infty}$. Given a finite $V\subset\mathbb{Z}^{2}$,
denote by $\partial V$ the inner boundary of $V$, that is, the set
of points in $V$ which have at least one neighbor site outside $V$.
We say that a sequence of finite sets $\left(V_{n}\right)_{n=1}^{\infty}$
forms a a F{\o}lner sequence if $\left|\partial V_{n}\right|/\left|V_{n}\right|\to0$
as $n\to\infty$. Consider the inner boundary $\partial\mathcal{R}_{n}$
of the random walk range $\mathcal{R}_{n}$. In Okada \cite{okada}
it is shown that there exists a constant $c\in\left[\pi^{2}/2,2\pi^{2}\right]$
such that $\lim_{n\to\infty}\frac{\mathbb{E}\left[\left|\partial\mathcal{R}_{n}\right|\right]}{n/\log^{2}n}=c$.
The property that almost surely $\left(\mathcal{R}_{n}\right)_{n=1}^{\infty}$
forms a F{\o}lner sequence is proved for a finite range symmetric
random walk on $\mathbb{Z}^{2}$ in Deligiannidis and Kosloff \cite{DK};
and extended to symmetric random walks on $\mathbb{Z}^{2}$ with finite
second moment in Deligiannidis, Gou{\"{e}}zel and Kosloff \cite{DGK}.
More precisely, by \cite[Theorem 12]{DGK}, for a centered, non-degenerate
random walk of finite second moment on $\mathbb{Z}^{2}$, there exists
a constant $c\in(0,\infty)$ such that almost surely 
\[
\lim_{n\to\infty}\frac{\left|\partial\mathcal{R}_{n}\right|}{n/\log^{2}n}=c.
\]

The proof for general random walks as in the statement of Theorem
\ref{main}, not necessarily SWS random walks, follows a similar outline,
and we explain below additional arguments we have to use in this general
case. We need in particular to have an analog of a \textquotedbl simple\textquotedbl{}
lemma for values of configurations in the wreath product, rather than
for diluted squares (see Lemma \ref{dilutedS1}); and a more general
form of \textquotedbl the  Uncrossing Lemma\textquotedbl{} in terms of wreath
products (see Lemma \ref{uncross2}). We also need to control the
closeness of the values of the configuration to the i.i.d. on the
range, see Lemma \ref{closeuniform}, in this step we use a result
of Flatto \cite{flatto}. The proof of the main result  of that paper is written for
simple random walk, but the proof works for centered random walks
with $(2+\epsilon)$-moment, see \cite[Remark after Theorem 3.1]{flatto}.
This is the only place we need the finite $(2+\epsilon)$-moment assumption
rather than finite second moment. It is natural to ask whether the
statement of \cite{flatto} is true under finite second moment assumption,
but to our knowledge it is not known. Finally, we mention one more
extra ingredient what we need for the proof of Theorem \ref{main}
in the case when the measure is infinitely supported. We need to control
large jumps between visited squares of subdivision, see the proof
of upper bound for $\left|X_{n}\right|$ at the end of Section \ref{sec:The-general-case}
(here the assumption of finite second moment is sufficient).

\section{Proof of theorem \ref{main} in the case of standard SWS random walk
and standard generating set\label{sec:standard}}

In this section we consider the case where $S$ is the standard generating
set of $G=\mathbb{Z}^{2}\wr\left(\mathbb{Z}/2\mathbb{Z}\right)$,
$S=\left\{ s_{1}=(e_{1},{\bf 0}),s_{2}=(e_{2},{\bf 0}),\gamma=(0,\delta_{0}^{1})\right\} $;
as we have mentioned in the introduction, $e_{1},e_{2}$ are standard
generators of $\mathbb{Z}^{2}$ and $\delta_{0}^{1}:\mathbb{Z}^{2}\to\mathbb{Z}/2\mathbb{Z}$
is the delta function at $(0,0)$, defined by $\delta_{0}^{1}(0,0)=1$
and $\delta_{0}^{1}(x)=1$ if $x\ne(0,0)$. We consider random walk
step distribution $\mu$ is the switch-walk-switch measure $\eta\ast\nu\ast\eta$,
where $\eta$ is uniform on $\left\{ id_{G},\delta\right\} $ and
$\nu$ is uniform on $\left\{ (\pm e_{1},{\bf 0}),(\pm e_{2},{\bf 0})\right\} $.
The projection of $\mu$ to $\mathbb{Z}^{2}$ is $\nu$, the standard
simple random walk on $\mathbb{Z}^{2}$.

\subsection{First observations about range of simple random walks on $\mathbb{Z}^{2}$}

Given a finite set $V\subseteq\mathbb{Z}^{2}$, recall that we denote
by $l_{{\rm TS}}(V)$ the length of a shortest path in the Cayley
graph of $\mathbb{Z}^{2}$ with respect to generators $e_{1}$, $e_{2}$
that visits all vertices in $V$. Note the following property of connected
sets in $\mathbb{Z}^{2}$.

\begin{lemma}\label{TSP1}

Let $V$ be a connected subset of $\mathbb{Z}^{2},$then 
\[
{\rm \ell_{TS}}(V)\le|V|\left(1+8\left(\frac{|\partial V|}{|V|}\right)^{1/3}\right).
\]

\end{lemma}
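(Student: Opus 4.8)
The plan is to treat two regimes according to the ratio $|\partial V|/|V|$. When $V$ is \emph{rough}, say $|\partial V|/|V|\ge 1/512$, the asserted bound is weaker than a trivial one: since $V$ is connected, its induced subgraph in $\mathbb{Z}^2$ has a spanning tree with $|V|-1$ edges, and a depth-first traversal of that tree (crossing each edge twice) produces a path visiting every vertex of $V$ of length at most $2(|V|-1)$. Because $8(|\partial V|/|V|)^{1/3}\ge 1$ exactly when $|\partial V|/|V|\ge 1/512$, in this regime $2|V|\le |V|\bigl(1+8(|\partial V|/|V|)^{1/3}\bigr)$ and we are done. So the real content lies in the \emph{bulky} regime $|\partial V|/|V|<1/512$, where the integer $C:=\lfloor (2|V|/|\partial V|)^{1/3}\rfloor$ satisfies $C\ge 1$, and where one builds a near-optimal serpentine path.

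In the bulky case I would tile $\mathbb{Z}^2$ by axis-parallel $C\times C$ boxes and classify each box meeting $V$ as \emph{full} (contained in $V$) or \emph{partial} (meeting both $V$ and its complement). Inside a full box a boustrophedon (snake) path visits all $C^2$ vertices in length $C^2-1\le |V\cap B|$; inside a partial box the same snake over the whole box visits $V\cap B$ (as a subset of the $C^2$ cells) in length at most $C^2$. Summing, the length spent \emph{inside} boxes is at most $|V|+N_b\,C^2$, where $N_b$ is the number of partial boxes. The two counting inputs are: the full boxes are disjoint and lie in $V$, so their number is at most $|V|/C^2$; and every partial box contains a point of $\partial V$ (along any lattice path inside the box joining a $V$-point to a non-$V$-point there is an edge leaving $V$, whose $V$-endpoint lies in $\partial V\cap B$), so $N_b\le|\partial V|$. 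Hence the inside-box cost is at most $|V|+|\partial V|\,C^2$.

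It remains to assemble the per-box snakes into one path. Since $V$ is connected, the boxes meeting $V$ form a connected set under edge-adjacency of boxes; I would visit them in boustrophedon order over the box grid (or, more crudely, along a spanning tree of the box-adjacency graph), linking the exit corner of one box-snake to the entry corner of the next at cost $O(C)$ per link. With $N\le |V|/C^2+|\partial V|$ boxes in all, the assembly cost is $O(NC)=O(|V|/C+|\partial V|\,C)$. Adding the contributions and choosing $C\asymp (|V|/|\partial V|)^{1/3}$ balances the terms $|\partial V|\,C^2$ and $|V|/C$, both of order $|V|^{2/3}|\partial V|^{1/3}$, while $|\partial V|\,C\le |V|^{2/3}|\partial V|^{1/3}$ using $|\partial V|\le|V|$; this gives an excess $O\bigl(|V|(|\partial V|/|V|)^{1/3}\bigr)$. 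The main obstacle is precisely this assembly step: one must order the box-snakes and align their entry/exit corners so that each link costs only $O(C)$ and the naive factor-$2$ loss of a tree traversal is avoided, since it is the constant here---together with the rounding of $C$ to an integer---that must be controlled to reach the stated value $8$. The threshold $1/512$ is tuned exactly so that the trivial spanning-tree bound absorbs the rough regime, leaving the box optimization to run only when $C\ge 1$.
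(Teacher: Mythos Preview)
Your approach is correct and is essentially the paper's: tile by $C\times C$ boxes, separate full from partial boxes, bound the number of partial boxes by $|\partial V|$, pay at most $C^2$ inside each box and $O(C)$ per link, then optimize $C\asymp(|V|/|\partial V|)^{1/3}$. The only real difference is in the assembly step you flag as the obstacle: rather than ordering box-snakes along a spanning tree of the box-adjacency graph, the paper takes $T$ to be the union of the perimeters of all boxes meeting $V$ (connected because $V$ is), walks along $T$, and at the first visit to each box's boundary inserts a detour into that box and back. This sidesteps the entry/exit alignment issue entirely, since $T$ is already the highway and $|T|\le 4C(|V|/C^2+|\partial V|)$; the constant $8$ then drops out of the choice $(C+1)^3=4|V|/|\partial V|$ with no need for a separate rough-regime case.
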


\begin{proof}

For any positive integer $C$, subdivide $\mathbb{Z}^{2}$ into squares
of size $C\times C$. Consider only those squares that intersect with
$V$ and denote by $T$ the union of the perimeters of these squares.
Denote by $V'$ the subset of $V$ which consists of points whose
squares are not completely contained in $V$. Note that since by assumption
$V$ is a connected subset of $\mathbb{Z}^{2}$, we have that $T$
is connected as well. We have that 
\begin{align*}
\left|T\right| & \le4C\left(\frac{\left|V\right|}{C^{2}}+\left|\partial V\right|\right),\\
\left|V'\right| & \le C^{2}\left|\partial V\right|.
\end{align*}
Let $\gamma_{{\rm TS}}(T)$ be a shortest path that goes through all
edges in $T$. One can then visit all points in $V$ by going along
this path through $T$ in the following way. \textcolor{black}{We
follow the path $\gamma_{{\rm TS}}(T)$ that visits points of $T$.}
For each $C\times C$ square $S$, look at the first visit of its
boundary by $\gamma_{{\rm TS}}(T)$. We stop at this point, insert
a path that visits all points of $V\cap S$, return along $T$ to
the same point of the boundary of $S$ where the path inside $S$
is inserted, and then continue along the path $\gamma_{{\rm TS}}(T)$.
Such  path provides an upper bound for ${\rm \ell_{TS}}(V$ 
\begin{align*}
{\rm \ell_{TS}}(V) & \le\left|\gamma_{{\rm TS}}(T)\right|+\left(4C+C^{2}\right)\frac{\left|V\right|}{C^{2}}+2|V'|\\
 & \le|V|+8\frac{\left|V\right|}{C}+\left(4C+2C^{2}\right)|\partial V|.
\end{align*}
Choosing that $C>0$ such that $\left(C+1\right)^{3}=4|V|/\left|\partial V\right|$,
we obtain the statement of lemma. 
\end{proof}

For the $\nu$-random walk on $\mathbb{Z}^{2}$, by \cite[Theorem 4.1]{DK},
the sequence $\left(\mathcal{R}_{n}\right)_{n=1}^{\infty}$ is almost
surely a F{\o}lner sequence. Thus we have the following:

\begin{corollary}

Consider a standard simple random walk on $\mathbb{Z}^{2}$. Let $\mathcal{R}_{n}$
be range of the random walk up to time $n$, then 
\[
\lim_{n\to\infty}\frac{\ell_{{\rm TS}}\left(\mathcal{R}_{n}\right)}{\left|\mathcal{R}_{n}\right|}=1\mbox{ a.s.}
\]

\end{corollary}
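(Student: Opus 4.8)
The plan is to sandwich the ratio $\ell_{{\rm TS}}(\mathcal{R}_{n})/|\mathcal{R}_{n}|$ between a deterministic lower bound and the upper bound supplied by Lemma \ref{TSP1}, and then to check that both bounds tend to $1$ almost surely. The only nonelementary ingredient is the almost sure F{\o}lner property of the range from \cite[Theorem 4.1]{DK}; once it is invoked the corollary follows immediately.

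First I would record the lower bound, valid for every finite $V\subseteq\mathbb{Z}^{2}$. A path in the Cayley graph of $\mathbb{Z}^{2}$ that visits all $|V|$ points of $V$ must use at least $|V|-1$ edges, since connecting $|V|$ distinct vertices by a single path requires that many steps. Hence $\ell_{{\rm TS}}(V)\ge|V|-1$, so
\[
\frac{\ell_{{\rm TS}}(\mathcal{R}_{n})}{|\mathcal{R}_{n}|}\ge1-\frac{1}{|\mathcal{R}_{n}|}.
\]

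Next, for the upper bound I would note that the range $\mathcal{R}_{n}$ is a connected subset of $\mathbb{Z}^{2}$: successive positions of the walk differ by a single generator $\pm e_{1},\pm e_{2}$, so the trajectory is a connected path and every visited vertex lies on it. This permits the application of Lemma \ref{TSP1} with $V=\mathcal{R}_{n}$, yielding
\[
\frac{\ell_{{\rm TS}}(\mathcal{R}_{n})}{|\mathcal{R}_{n}|}\le1+8\left(\frac{|\partial\mathcal{R}_{n}|}{|\mathcal{R}_{n}|}\right)^{1/3}.
\]

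Finally I would pass to the limit. By the Dvoretzky--Erd{\H{o}}s law \eqref{eq:erdosdovret} the range of the planar simple random walk grows like $\pi n/\log n$, so $|\mathcal{R}_{n}|\to\infty$ almost surely and the lower bound converges to $1$. By \cite[Theorem 4.1]{DK} the sequence $(\mathcal{R}_{n})_{n=1}^{\infty}$ is almost surely a F{\o}lner sequence, that is $|\partial\mathcal{R}_{n}|/|\mathcal{R}_{n}|\to0$, so the error term $8(|\partial\mathcal{R}_{n}|/|\mathcal{R}_{n}|)^{1/3}$ vanishes and the upper bound also converges to $1$. Sandwiching gives $\ell_{{\rm TS}}(\mathcal{R}_{n})/|\mathcal{R}_{n}|\to1$ almost surely. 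There is no substantial obstacle here: the two bounds are routine, and all the analytic difficulty has been absorbed into Lemma \ref{TSP1} and the deep F{\o}lner estimate of \cite{DK}.
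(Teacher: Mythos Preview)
Your proof is correct and follows essentially the same route as the paper's: the trivial lower bound $\ell_{{\rm TS}}(\mathcal{R}_n)\ge|\mathcal{R}_n|-1$, the upper bound from Lemma~\ref{TSP1}, and the F{\o}lner property of the range from \cite[Theorem 4.1]{DK}. Your version is in fact slightly more careful than the paper's, which writes the lower bound as $\ell_{{\rm TS}}(\mathcal{R}_n)\ge|\mathcal{R}_n|$ and does not explicitly justify that $\mathcal{R}_n$ is connected before invoking Lemma~\ref{TSP1}.
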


\begin{proof}

By \cite[Theorem 4.1]{DK}, $\left|\partial R_{n}\right|/\left|R_{n}\right|\to0$
a.s. when $n\to\infty$, thus for $\mathbb{P}$-a.e. $\omega$, there
exists a number $N(\omega)=N_{\epsilon}(\omega)$ such that for any
$n\ge N$, we have $\left|\partial R_{n}\right|/|R_{n}|<\epsilon$.
It follows that from Lemma \ref{TSP1}, for $n>N(\omega)$, $R_{n}^{{\rm TSP}}\le(1+8\epsilon^{1/3})|R_{n}|$.
In the lower bound direction, $R_{n}^{{\rm TSP}}\ge|R_{n}|$, the
statement follows.

\end{proof}

\subsection{An auxiliary fact about TSP on the diluted lattices}

Consider a square of size $n\times n$ in $\mathbb{Z}^{2}$ and denote
by $D_{n}$ the diluted lattice (site percolation) where each vertex
of the lattice is present independently with probability $p$. By
almost sub-additivity we have the following.

\begin{lemma}\label{dilutedlemma1}

There exists a constant $\alpha_{p}>0$ such that $\mathbb{E}\left[\ell_{{\rm TS}}\left(D_{n}\right)\right]/n^{2}\to\alpha_{p}$
when $n\to\infty$.

\end{lemma}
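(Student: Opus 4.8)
The plan is to study the deterministic sequence $g(n):=\mathbb{E}\left[\ell_{{\rm TS}}(D_n)\right]$ and to show it is bounded, monotone, and almost subadditive, from which convergence of $a_n:=g(n)/n^2$ to a finite positive constant follows. First I would record two crude bounds. For an upper bound, the boustrophedon path visiting every lattice point of the $n\times n$ box row by row has length at most $n^2+2n$ and a fortiori visits every point of $D_n$, so $g(n)\le n^2+2n$ and $a_n\le 2$ for $n\ge 2$. For a lower bound, any path visiting the $|D_n|$ points of the diluted lattice has at least $|D_n|-1$ edges, each of length at least $1$, so $\ell_{{\rm TS}}(D_n)\ge |D_n|-1$; taking expectations gives $g(n)\ge p(n+1)^2-1$, hence $\liminf_n a_n\ge p>0$. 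Monotonicity of $g$ comes from a coupling: placing the $m\times m$ box in a corner of the $n\times n$ box for $m\le n$ and using the same dilution field realizes $D_m=D_n\cap(\text{small box})\subseteq D_n$; since any path visiting all of $D_n$ visits all of $D_m$, we get $\ell_{{\rm TS}}(D_m)\le \ell_{{\rm TS}}(D_n)$ pointwise, and thus $g(m)\le g(n)$.

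Next I would establish almost-subadditivity. Given integers $k,n$, partition the $kn\times kn$ box into $k^2$ disjoint $n\times n$ subboxes; by independence, the restriction of the dilution to each subbox is an independent copy of $D_n$. I concatenate optimal paths on the subboxes by visiting the subboxes in boustrophedon order and joining an endpoint of each subpath to an endpoint of the next one by a shortest lattice segment. Since consecutive subboxes are adjacent (skipping empty ones only lengthens individual jumps while keeping the cumulative connection length comparable), each joining segment has length $O(n)$ and the total connection cost is deterministically $O(k^2 n)$. This produces a single path visiting all of $D_{kn}$, so by linearity of expectation
\[
g(kn)\le k^2 g(n)+Ck^2 n,\qquad\text{equivalently}\qquad a_{kn}\le a_n+\frac{C}{n}\quad(k\ge 1).
\]

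Finally I would combine these ingredients. Set $\underline{\alpha}:=\liminf_n a_n\in[p,2]$ and pick $n_j\to\infty$ with $a_{n_j}\to\underline{\alpha}$. For $m$ large put $k=\lfloor m/n_j\rfloor$, so $kn_j\le m<(k+1)n_j$. Monotonicity together with $m\ge kn_j$ gives $a_m\le g((k+1)n_j)/(kn_j)^2$, and feeding in the almost-subadditivity bound for $g((k+1)n_j)$ yields
\[
a_m\le \frac{(k+1)^2}{k^2}\left(a_{n_j}+\frac{C}{n_j}\right).
\]
Letting $m\to\infty$ with $n_j$ fixed forces $k\to\infty$, so $\limsup_m a_m\le a_{n_j}+C/n_j$; letting $j\to\infty$ gives $\limsup_m a_m\le\underline{\alpha}$. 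Hence $a_n\to\underline{\alpha}=:\alpha_p$ with $\alpha_p\ge p>0$. I expect the step requiring the most care to be this last interpolation from multiples of a fixed $n_j$ to all $m$, which is exactly what allows the grid subdivision — a priori informative only for sizes divisible by $k$ — to pin down the whole sequence; one must also keep the connection cost in the subdivision step at order $O(k^2 n)$, since any looser bound would fail to vanish after dividing by $(kn)^2$.
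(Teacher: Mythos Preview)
Your argument is correct. It shares with the paper the core subdivision idea---cut the big square into congruent sub-squares, take an optimal TSP path in each, and patch them with connection cost $O(n)$ per sub-square---but the passage from special sizes to all $n$ is organized differently. The paper first establishes convergence along the dyadic subsequence $(2^m)$ via $b_{m+1}\le b_m+9\cdot 2^{-m-2}$, and then sandwiches a general $n$ between dyadic scales from \emph{both sides}: embedding a square of side $2^j\lfloor n/2^j\rfloor$ inside the $n\times n$ square to get an upper bound, and embedding copies of the $n\times n$ square inside a $2^k\times 2^k$ square to get a lower bound. You instead exploit the pointwise monotonicity $g(m)\le g(n)$ for $m\le n$ (a fact the paper never uses) together with the one-sided inequality $a_{kn}\le a_n+C/n$, and then run a $\liminf$-extraction argument. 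Your route is slightly more elementary in that it needs only one direction of the subdivision inequality, the monotonicity doing the work of the other direction; the paper's two-sided sandwich, on the other hand, avoids having to introduce monotonicity and is the template reused later in Lemma~\ref{dilutedS1} for general generating sets, where pointwise monotonicity of the $S$-TSP under inclusion is less immediate.
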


\begin{proof}

We first show the convergence along the subsequence $\left(2^{m}\right)_{m=1}^{\infty}$.
Consider a square of size $2^{m+1}\times2^{m+1}$ and subdivide it into
$4$ squares  $C_1$, $C_2$, $C_3$, $C_4$.. In each square $C_{i}$, $1 \le i \le 4$, 
 take a traveling salesman path
$P_{i}$ visiting the points in the diluted lattice in $C_{i}$. We denote by 
$A_{i}$ and $B_{i}$ the starting and ending point of $P_{i}$.
Then one can find a path visiting all points in the diluted lattice
of the larger square by following the paths $P_{1},P_{2},P_{3}$ and
$P_{4}$, adding in  shortest paths connecting $B_{1}$ to $A_{2}$,
$B_{2}$ to $A_{3}$ and $B_{3}$ to $A_{4}$. Therefore 
\[
\mathbb{E}\left[\ell_{{\rm TS}}\left(D_{2^{m+1}}\right)\right]\le4\mathbb{E}\left[\ell_{{\rm TS}}\left(D_{2^{m}}\right)\right]+9\cdot2^{m}.
\]
Write $b_{m}=\mathbb{E}\left[\ell_{{\rm TS}}\left(D_{2^{m}}\right)\right]/2^{2m}$,
then we have that $b_{m+1}\le b_{m}+9\cdot2^{-m-2}$.
It follows that 
the sequence $b_{m}$ 
converges to
a limit constant $\alpha_{p}$. Since $\mathbb{E}\left[\left|D_{2^{m}}\right|\right]\ge p2^{2m}$,
we have that $\alpha_{p}\ge p>0$.

For general $n$, take $2^{j}$ such that $2^{j}\ll n$. Consider
the subsquare of side length $2^{j}\left\lfloor n/2^{j}\right\rfloor $
inside the square of side length $n$ with the same lower left corner.
Then we have 
\begin{equation}
\mathbb{E}\left[\ell_{{\rm TS}}\left(D_{n}\right)\right]\le\mathbb{E}\left[\ell_{{\rm TS}}\left(D_{2^{j}\left\lfloor n/2^{j}\right\rfloor }\right)\right]+4\cdot2^{j}n.\label{eq:dyadic1}
\end{equation}
To get a lower bound, for $2^{k}\gg n$, divide its sub-square of
side length $n\left\lfloor 2^{k}/n\right\rfloor $ into squares of
side length $n$ and connecting the TSP inside each small square,
we have 
\begin{equation}
\mathbb{E}\left[\ell_{{\rm TS}}\left(D_{2^{k}}\right)\right]\le\left(\frac{2^{k}}{n}\right)^{2}\mathbb{E}\left[\ell_{{\rm TS}}\left(D_{n}\right)\right]+C2^{2k}/n.\label{eq:dyadic2}
\end{equation}
It follows that $\mathbb{E}\left[\ell_{{\rm TS}}\left(D_{n}\right)\right]/n^{2}$
converges to $\alpha_{p}$ as $n\to\infty$.

\end{proof}

\begin{remark}

It is not hard to see that $p<\alpha_{p}<1$. Indeed,  observe that a positive frequency
of absent squares of size $C\times C$, for a large constant $C$,
implies that $\alpha_{p}<1$. On the other hand, positive frequencies
of hanging edges show that $\alpha_{p}>p$. The Bartholdi-Platzman
bound from \cite{BP82} (which is recalled in Lemma \ref{multiscale})
implies that $\lim_{p\to0+}\alpha_{p}=0$.

\end{remark}

\subsection{An uncrossing lemma\label{subsec:uncrossing1}}

The key ingredient in our argument is the following uncrossing lemma.
By a path in $\mathbb{Z}^{2}$ we mean a directed path in the standard
Cayley graph of $\left(\mathbb{Z}^{2},(e_{1},e_{2})\right)$. Given
a set $D$ in $\mathbb{Z}^{2}$ and two paths $P_{1}$, $P_{2}$ where
each $P_{i}$ starts and terminates at the boundary $\partial D$,
write $A_{i}$ for the starting point of $P_{i}$ and $B_{i}$ for
the end point of $P_{i}$. We say $P_{1}$ and $P_{2}$ have an \emph{essential
crossing} if $A_{2}$ and $B_{2}$ are contained in the two different
open arcs of $\partial D$ between $A_{1}$ and $B_{1}.$ 

Given a finite collection $\mathcal{T}$ of paths starting and ending
at the boundary $\partial D$, we perform the following procedure.
If there are two paths $P_{1}$ and $P_{2}$ with a common starting/ending
point such that $\left\{ A_{1},B_{1}\right\} \cap\left\{ A_{2}, B_{2}\right\} \neq\emptyset$,
then reversing the direction of one of them if necessary, we replace
them by the path which is the concatenation of $P_{1},P_{2}$. By
performing this repeatedly, we can replace $\mathcal{T}$ by $\mathcal{T}'$
in such a way  that the starting and ending points of the paths in the new collection
$\mathcal{T}'$ are pairwise disjoint. In $\mathcal{T}'$ each starting
or ending point appears exactly once, unless there is a path with
the same starting/ending point (that is, a loop). Note that by definition,
a loop does not have essential crossing with any other path.

Since we consider the standard generating set $\{e_{1},e_{2}\}$ of
$\mathbb{Z}^{2}$, if two paths $P_{1}$ and $P_{2}$ have essential
crossings then they must intersect. Let $O$ be an intersection point
of $P_{1}$ and $P_{2}$. We call the following operation \emph{uncrossing
of $P_{1}$ and $P_{2}$ around $O$}: replace the path $P_{1}$ by
$Q_{1}$, where $Q_{1}$ is the concatenation of the segment of $P_{1}$
from $A_{1}$ to $O$ and the segment of $P_{2}$ from $O$ to $B_{2}$;
and similarly replace $P_{2}$ by $Q_{2}$ which concatenates the
segment of $P_{2}$ from $A_{2}$ to $O$ and the segment of $P_{1}$
from $O$ to $B_{1}$. It is clear that the union of images of $Q_{1}$
and $Q_{2}$ is the same as  the  union of $P_{1}$ and $P_{2}$, and,  moreover,
$Q_{1}$ and $Q_{2}$ do not have an essential crossing. See Figure
\ref{uncrossing0}. Note that $Q_{1}$ and $Q_{2}$ might have intersection
points other than $O$.

\begin{figure}
\includegraphics{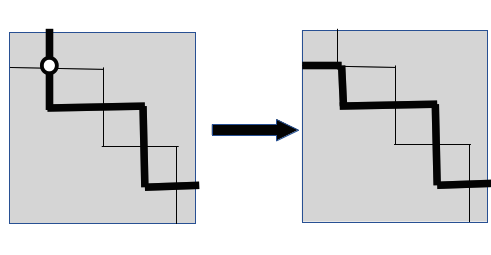} \caption{Uncrossing two paths}
\label{uncrossing0} 
\end{figure}
Apply this uncrossing procedure repeatedly, we have the following
lemma. The first part is essentially the same as {\it the circle freeway
lemma} in Lalley \cite{lalley}.

\begin{lemma}[Uncrossing lemma]\label{uncross}

Consider a collection $\mathcal{T}$ of paths in a finite domain $D$
where each path starts and ends on the boundary of $D$. 
\begin{description}
\item [{(i)}] There is another collection $\mathcal{S}$ of paths, which
has the same union of their images as the initial one, and no two
paths in the new collection has an essential crossing. 
\item [{(ii)}] One can obtain such a collection $\mathcal{S}$ from the
original collection $\mathcal{T}$ by first replacing $\mathcal{T}$
by $\mathcal{T}'$ such that the paths in $\mathcal{T}'$ have pairwise
disjoint starting and ending points, and then performing at most $m$
uncrossings, where $m$ is the number of paths in $\mathcal{T}'$. 
\end{description}
\end{lemma}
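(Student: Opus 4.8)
The plan is to prove the stronger statement (ii) directly by induction on the number $m$ of paths in $\mathcal{T}'$; part (i) then follows at once, since the resulting collection $\mathcal{S}$ has no essential crossing. I will work only with the combinatorial data of a path $P_i$ that is relevant to essential crossings, namely the unordered pair $\{A_i,B_i\}$ of its endpoints on the cyclically ordered boundary $\partial D$; in this chord-diagram language, two paths have an essential crossing exactly when their endpoint pairs interleave on $\partial D$. I first record two facts to be used repeatedly. Reversing the orientation of a path does not change its image, and one checks directly from the definition of the uncrossing operation at a point $O$ that, after orienting the two crossing paths appropriately, the uncrossing can be made to realize \emph{either} of the two non-crossing pairings of the four endpoints; we are thus free to choose which pairing results. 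Second, a path whose two endpoints are adjacent on $\partial D$ (no other endpoint lying on the shorter arc between them) crosses no other path, and this stays true however the other paths are later modified, because uncrossings only reconnect the fixed boundary endpoints and never place an endpoint on that empty arc. Loops, i.e. paths with $A_i=B_i$, are set aside at the outset: by definition they cross nothing, so they cost $0$ uncrossings while still being counted among the $m$ paths.

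For the inductive step I select the path to be \emph{peeled off}. Among the current chords choose one, $c=\{u,w\}$, whose shorter boundary arc is minimal. If that short arc contains no other endpoint, then $c$ crosses nothing; I set it aside and apply the induction hypothesis to the remaining $m-1$ chords, for a cost of $0+(m-1)\le m$ uncrossings. Otherwise, let $p$ be the endpoint inside the short arc closest to $u$, lying on a chord $c'$ with far endpoint $w'$. Minimality of the short arc of $c$ forces $w'$ to lie on the complementary (long) arc, so $c$ and $c'$ cross; and by the choice of $p$ the endpoints $u$ and $p$ are adjacent. I uncross $c$ and $c'$ at an intersection point, orienting them so that the resulting non-crossing pairing is $\{u,p\}$ together with $\{w,w'\}$. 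The new chord $\{u,p\}$ is boundary-adjacent, hence crosses nothing and can be peeled off; applying the induction hypothesis to the remaining $m-1$ chords (the chord $\{w,w'\}$ and the untouched $m-2$) costs at most $m-1$ further uncrossings, for a total of $1+(m-1)=m$. In both cases the union of images is preserved, since peeling changes nothing and each uncrossing preserves the union, and the peeled chords are pairwise non-crossing by the second fact above.

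The step requiring the most care is the claim that $u$ and $p$ lie on chords that genuinely cross and that a single uncrossing turns one of them into a boundary-adjacent chord: this is exactly what the minimal-arc choice buys, and it is the reason the count is linear rather than quadratic in $m$. By contrast, using only that each uncrossing strictly decreases the total number of interleaving pairs — which is also true and gives an independent proof of part (i) — one obtains a bound of order $m^2$; the peeling argument above is what yields the stated bound $m$. The main obstacle is therefore not termination, which is easy, but organizing the uncrossings so that each one can be charged to a distinct path that is removed immediately afterward, and this is precisely what the ``peel the minimal chord'' scheme accomplishes.
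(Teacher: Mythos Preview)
Your proof is correct and follows essentially the same approach as the paper: both argue by induction on $m$, peel off a ``good'' chord (one with an empty arc) at each step, and when no good chord exists, perform a single uncrossing with the path carrying the nearest boundary endpoint to manufacture one. Your selection of the chord with minimal short arc is a slightly more careful variant of the paper's ``pick any path $P$,'' since it cleanly guarantees that the neighbouring chord $c'$ actually crosses $c$; otherwise the two arguments coincide.
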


\begin{proof}

Given a finite collection of paths $\mathcal{T}$ in $D$, we first
replace $\mathcal{T}$ by $\mathcal{T}'$ via joining paths (reversing
direction of some of them if necessary) as explained before.

We prove the statement by induction on the number of paths of in the
collection $\mathcal{T}'$. Given a path $P$ with starting point
$A$ and ending point $B$, denote by $C_{1},C_{2}$ the two arcs
on $\partial D$ between $A$ and $B$. We say that a path $P$
in the collection $\mathcal{T}'$ is good if there exists  $i$, $i=1$ or $i=2$, 
such that all other paths in $\mathcal{T}'$ have starting and ending
points in $C_{i}$. In particular it implies that $P$ has no essential
crossings with the other paths. If there is a good path $P$ in $\mathcal{T}',$
then we can remove it and apply the induction hypothesis to the collection
$\mathcal{T}'\setminus\{P\}$ consisting of the rest of paths. Note
that performing uncrossing for the rest of the paths will not introduce
any essential crossing with $P$. If there is no good path, pick any
path $P$ and on the arc $C_{1}$ take the point $A'$ which is the
closest to $A$ which is starting or ending point some other path
$Q$ in the collection. Reverse the direction of $Q$ if necessary,
we uncross $P$ and $Q$ such that one of the new path starts at $A$
and end at $A'$. Now this new path is good, we remove it and apply
the induction hypothesis.

\end{proof}

After performing uncrossings to the original collection $\mathcal{T}$,
we can join the paths in the new collection $\mathcal{S}$.

\begin{lemma}\label{joining1}

Suppose $\mathcal{S}$ is a collection of paths in the domain $D$
such that each path starts and ends on $\partial D$ and there is
no essential crossings between paths of $\mathcal{S}$. Then there
is a path of total length at most $1.5\left|\partial D\right|+\sum_{P\in\mathcal{S}}|P|$
whose image is obtained from the union of paths in $\mathcal{S}$ by adding
connecting segments along the perimeter of $D$.

\end{lemma}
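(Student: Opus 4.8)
The plan is to recast the statement as an instance of the Chinese postman (route inspection) problem on a planar multigraph and then invoke Euler's theorem. Let $q_1,q_2,\dots,q_{2k}$ be the endpoints of the paths in $\mathcal{S}$, listed in cyclic order around $\partial D$; by the reduction from $\mathcal{T}$ to $\mathcal{T}'$ described before Lemma \ref{uncross} we may assume these $2k$ points are pairwise distinct (a path that is a loop, with coinciding endpoints, contributes an even amount to every degree below and is ignored in the parity bookkeeping). Form the multigraph $H$ whose vertices are $q_1,\dots,q_{2k}$, with one \emph{chord edge} for each $P\in\mathcal{S}$ joining its two endpoints and carrying weight $|P|$, and with one \emph{arc edge} $a_1,\dots,a_{2k}$ for each boundary arc between consecutive endpoints, so that $\sum_j|a_j|=|\partial D|$. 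The union of the images of the paths of $\mathcal{S}$ together with $\partial D$ is exactly the image of $H$, and $H$ is connected, since the arc edges already form the cycle $\partial D$ through all vertices.

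Next I would fix the parities. In $H$ every vertex $q_j$ has degree $3$: one chord edge together with the two incident arcs $a_{j-1}$ and $a_j$ (indices mod $2k$), so all $2k$ vertices have odd degree. Duplicating the arcs of odd index $a_1,a_3,\dots,a_{2k-1}$ raises each vertex degree by one, since for each $j$ exactly one of $a_{j-1},a_j$ is chosen; this makes every degree even, and the same holds for the even-index arcs. As the odd-index and even-index families partition all $2k$ arcs, the cheaper of the two has total weight at most $\tfrac12|\partial D|$, and I choose it. Let $H'$ be $H$ with this family of arcs duplicated: then $H'$ is connected and every vertex has even degree.

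Euler's theorem then produces an Eulerian circuit in $H'$, traversing every chord edge once, every boundary arc at least once, and the chosen arcs twice. Realised in $\mathbb{Z}^2$ this circuit is a closed lattice path whose image is the union of the paths of $\mathcal{S}$ together with connecting segments along $\partial D$, as required, and whose total length is
\[
\sum_{P\in\mathcal{S}}|P|\;+\;|\partial D|\;+\;(\text{weight of duplicated arcs})\;\le\;\sum_{P\in\mathcal{S}}|P|+\tfrac{3}{2}|\partial D|.
\]
If an open path rather than a closed one is wanted, deleting one edge only decreases the length.

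Finally I would note the role of the hypotheses and the delicate points. The length bound is really a property of the cyclic arrangement of endpoints on $\partial D$ and does not by itself use the absence of essential crossings; the non-crossing hypothesis enters earlier, through Lemma \ref{uncross}, to keep $\sum_{P\in\mathcal{S}}|P|$ from being inflated relative to the original collection, and it is what lets the connecting segments lie cleanly along the perimeter. The only genuinely delicate points are the parity bookkeeping when paths share or coincide at endpoints, and the verification that the abstract Eulerian circuit is realised as an honest lattice path (which holds because every edge of $H'$ is itself a lattice path and consecutive edges meet at common lattice vertices). I expect the main obstacle to be phrasing the degree/parity argument so that loops and endpoint coincidences are handled uniformly, rather than any analytic difficulty.
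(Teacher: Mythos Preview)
Your Eulerian circuit argument is correct and is a genuinely different route from the paper's proof. The paper argues in two stages: first it traverses all non-loop paths using connecting arcs on $\partial D$ of total length at most $|\partial D|/2$ (this step uses the non-crossing hypothesis, since the nested structure of a non-crossing matching allows a DFS-type traversal), and then it visits the loops by going around the perimeter once more, contributing at most $|\partial D|$. Your approach instead keeps \emph{all} boundary arcs in the graph from the start, fixes parities by doubling the cheaper of the two alternating arc families, and reads off an Euler circuit. Both decompositions give $\tfrac12|\partial D|+|\partial D|$, just attributed differently. The payoff of your version is what you noticed: the bound $1.5|\partial D|+\sum_P|P|$ follows from the cyclic placement of endpoints alone, so the non-crossing hypothesis is not actually needed for this lemma.

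Two small points to tighten. First, your parity bookkeeping for loops is not quite complete: if $\mathcal{S}$ contains $m$ non-loops and $\ell$ loops with pairwise distinct basepoints, the number of distinct endpoints is $2m+\ell$, which need not be even, so writing the list as $q_1,\dots,q_{2k}$ is unjustified. The clean fix is to take only the $2m$ non-loop endpoints as vertices of $H$ and let the arcs be the boundary segments between consecutive ones; each loop basepoint then lies in the interior of some arc, and since the Euler circuit traverses every arc, you can splice each loop in when the circuit passes its basepoint. Alternatively, include the loop basepoints as degree-$4$ vertices and replace ``duplicate odd-indexed arcs'' by ``take a minimum $T$-join in the boundary cycle with $T$ the set of degree-$3$ vertices'': the two alternating choices still partition $\partial D$, so the cheaper has weight at most $|\partial D|/2$, and loop basepoints receive $0$ or $2$ duplicated incident arcs, preserving even degree.

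Second, your explanation of where non-crossing enters is slightly off. The uncrossing operation in Lemma~\ref{uncross} preserves $\sum_P|P|$ exactly (it only recombines segments), so it is not a matter of preventing inflation. The accurate statement is simply that the paper's traversal of the non-loop paths relies on the nested structure to achieve the $|\partial D|/2$ bound there, whereas your Chinese-postman argument dispenses with that hypothesis entirely for the present lemma.
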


\begin{figure}
\includegraphics{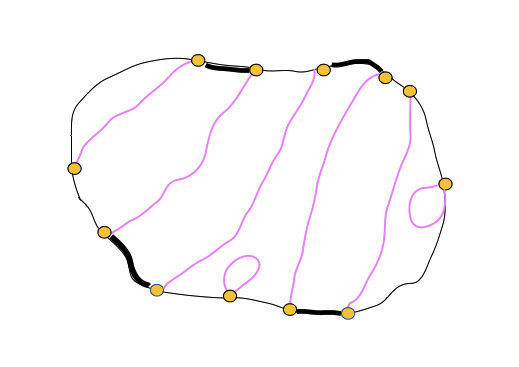} \caption{First we ignore possible loops, pass through all other paths in the
collection, adding intervals on the perimeter of the domain (shown
in black). The total sum of these black intervals can be chosen to
be at most one half of the perimeter.}
\label{uncrossed1} 
\end{figure}
\begin{proof}

First we pass through all paths that are not loops (see Figure \ref{uncrossed1}).
The length of this path is at most the sum of the lengths of non-loop
paths plus $|\partial D|/2$. Then we go through all the loops, passing  through
any point of the perimeter at most once. It is clear that  the total sum of the
segments of the perimeter we  passed  through is at most the length of the perimeter.
Therefore the total length of our path is at most sum of the lengths
of initial paths plus $1.5|\partial D|$.

\end{proof}

The multiplicative constant $1.5$ in front of $\left|\partial D\right|$
is not important for our applications, but it is optimal in the setting
of Lemma \ref{joining1}.

\subsection{Standard random walk on $\mathbb{Z}^{2}\wr(\mathbb{Z}/2\mathbb{Z})$}

Denote by $\mathcal{DR}_{n}$ the diluted range $\mathcal{R}_{n}$
with parameter $1/2$. Recall that $\ell_{{\rm TS}}(\mathcal{DR}_{n})$
denotes the shortest length of a path visiting all points in the diluted
range $\mathcal{DR}_{n}$. By using the Uncrossing Lemma \ref{uncross}, we show
that $\ell_{{\rm TS}}(\mathcal{DR}_{n})/|\mathcal{R}_{n}|$ converges
to a positive constant almost surely. The steps in the proof are illustrated
in Figures \ref{TravelingSP}, \ref{uncrossed2} and \ref{join}.

\begin{figure}
\includegraphics[scale=0.95]{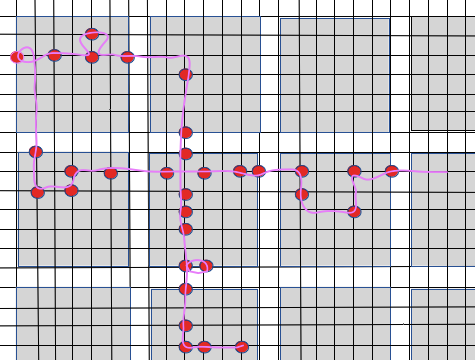} \caption{We subdive a path which visits the red nodes into intervals that stay
inside each grey square of side length $C$.}
\label{TravelingSP} 
\end{figure}
\begin{lemma}\label{dilutedlemma2}

The TSP length of the diluted range satisfies 
\[
\frac{\ell_{{\rm TS}}(\mathcal{DR}_{n})}{|\mathcal{R}_{n}|}\to\alpha_{1/2}\mbox{ a.s. when }n\to\infty,
\]
where the constant $\alpha_{1/2}$ is the same as in Lemma \ref{dilutedlemma1}
with $p=1/2$.

\end{lemma}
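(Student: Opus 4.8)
The plan is to fix a large integer $C$, tile $\mathbb{Z}^{2}$ by boxes of side length $C$, and sandwich $\ell_{{\rm TS}}(\mathcal{DR}_n)$ by the sum $\sum_B \ell_{{\rm TS}}(\mathcal{DR}_n\cap B)$ of the box-wise travelling salesman lengths, up to an error negligible after dividing by $|\mathcal{R}_n|$. Call a box \emph{interior} if it is contained in $\mathcal{R}_n$ and \emph{boundary} if it meets $\mathcal{R}_n$ without being contained in it. A boundary box contains a point of $\partial\mathcal{R}_n$, so the number of boundary boxes is at most $|\partial\mathcal{R}_n|$, while the set $\mathcal{I}_n$ of interior boxes satisfies $|\mathcal{I}_n|\,C^{2}\le|\mathcal{R}_n|\le|\mathcal{I}_n|\,C^{2}+C^{2}|\partial\mathcal{R}_n|$. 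Since the projected walk is the standard simple random walk, $\mathcal{R}_n$ is connected and, by the F\o lner property $|\partial\mathcal{R}_n|/|\mathcal{R}_n|\to0$ a.s.\ from \cite[Theorem 4.1]{DK}, we get $|\mathcal{I}_n|\,C^{2}/|\mathcal{R}_n|\to1$ a.s.

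For the comparison I treat the two inequalities separately. For the lower bound, restrict a shortest path realising $\ell_{{\rm TS}}(\mathcal{DR}_n)$ to a box $B$: this produces a collection of subpaths with endpoints on $\partial B$ (apart from the at most two global endpoints of the path, which are negligible). The Uncrossing Lemma \ref{uncross} replaces them by a collection with the same union of images, the same total length, and no essential crossing, and Lemma \ref{joining1} then joins these into a single path visiting $\mathcal{DR}_n\cap B$ at extra cost at most $1.5|\partial B|=6C$. Hence the portion of the optimal path inside $B$ has length at least $\ell_{{\rm TS}}(\mathcal{DR}_n\cap B)-6C$, and summing over all $N_n$ boxes meeting $\mathcal{R}_n$ gives $\ell_{{\rm TS}}(\mathcal{DR}_n)\ge\sum_B\ell_{{\rm TS}}(\mathcal{DR}_n\cap B)-6C\,N_n$. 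For the upper bound I stitch together optimal box-tours: because $\mathcal{R}_n$ is connected the box-adjacency graph is connected, so a depth-first traversal of a spanning tree links the box-tours at additional cost $O(C)$ per box, yielding $\ell_{{\rm TS}}(\mathcal{DR}_n)\le\sum_B\ell_{{\rm TS}}(\mathcal{DR}_n\cap B)+O(C\,N_n)$. In both directions $N_n\le|\mathcal{R}_n|/C^{2}+|\partial\mathcal{R}_n|$, so dividing the error by $|\mathcal{R}_n|$ leaves $O(1/C)$ by the F\o lner property.

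It then remains to show $\sum_B\ell_{{\rm TS}}(\mathcal{DR}_n\cap B)/|\mathcal{R}_n|\to\mathbb{E}[\ell_{{\rm TS}}(D_C)]/C^{2}$ almost surely, where $D_C$ is the diluted $C\times C$ square with parameter $1/2$. Boundary boxes are negligible since each box-tour is at most $C^{2}$ (snake through the whole box), so they contribute at most $C^{2}|\partial\mathcal{R}_n|=o(|\mathcal{R}_n|)$. For an interior box $B$ the set $\mathcal{DR}_n\cap B$ is exactly the dilution of the full box, and these dilutions are i.i.d.\ copies of $D_C$, independent of the walk. Conditioning on the walk makes the variables $\{\ell_{{\rm TS}}(\mathcal{DR}_n\cap B)\}_{B\in\mathcal{I}_n}$ independent and bounded by $C^{2}$; Hoeffding's inequality bounds the conditional probability that $\big|\sum_{B\in\mathcal{I}_n}\ell_{{\rm TS}}(\mathcal{DR}_n\cap B)-|\mathcal{I}_n|\,\mathbb{E}[\ell_{{\rm TS}}(D_C)]\big|>\epsilon|\mathcal{R}_n|$ by $2\exp(-c_\epsilon|\mathcal{R}_n|/C^{2})$. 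On the almost sure event that $|\mathcal{R}_n|\gtrsim n/\log n$ (Dvoretzky--Erd{\H{o}}s) these bounds are summable, so the conditional Borel--Cantelli lemma gives almost sure convergence of the empirical average to $\mathbb{E}[\ell_{{\rm TS}}(D_C)]$; multiplying by $|\mathcal{I}_n|/|\mathcal{R}_n|\to1/C^{2}$ gives the claim. Combining the three steps on a single full-measure event, for every $C$ both the limit inferior and the limit superior of $\ell_{{\rm TS}}(\mathcal{DR}_n)/|\mathcal{R}_n|$ lie within $O(1/C)$ of $\mathbb{E}[\ell_{{\rm TS}}(D_C)]/C^{2}$; letting $C\to\infty$ and invoking $\mathbb{E}[\ell_{{\rm TS}}(D_C)]/C^{2}\to\alpha_{1/2}$ from Lemma \ref{dilutedlemma1} yields $\ell_{{\rm TS}}(\mathcal{DR}_n)/|\mathcal{R}_n|\to\alpha_{1/2}$ almost surely.

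The step I expect to be the main obstacle is the almost sure law of large numbers over the interior boxes, because the index set $\mathcal{I}_n$ is itself random and growing and convergence must hold jointly in the randomness of the walk and of the dilution. Conditioning on the walk is what decouples the two sources of randomness, after which concentration and Borel--Cantelli apply, but this requires $|\mathcal{R}_n|$ to grow fast enough for summability, which is precisely the role of the Dvoretzky--Erd{\H{o}}s estimate. The second delicate point, pervasive throughout, is that the uncrossing and stitching errors are all of order $C\cdot(\text{number of boxes})$ and the boundary-box contribution is of order $C^{2}|\partial\mathcal{R}_n|$; both are $o(|\mathcal{R}_n|)$ only thanks to the F\o lner property $|\partial\mathcal{R}_n|/|\mathcal{R}_n|\to0$, so this property is used crucially at every stage rather than cosmetically.
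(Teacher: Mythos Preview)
Your proof is correct and follows essentially the same approach as the paper: tile by $C\times C$ boxes, use the Uncrossing Lemma plus Lemma~\ref{joining1} for the lower bound, stitch box-tours for the upper bound, invoke the law of large numbers on full boxes, control boundary boxes via the F{\o}lner property of $(\mathcal{R}_n)$, and let $C\to\infty$ via Lemma~\ref{dilutedlemma1}. The only cosmetic differences are that the paper connects boxes by walking along the union of their perimeters rather than a spanning-tree traversal, and that the paper states the box-wise LLN (your Hoeffding/Borel--Cantelli step) in one line as ``by the law of large numbers'' without spelling out the concentration-plus-summability argument; your more explicit treatment of this point, including the role of the Dvoretzky--Erd{\H{o}}s growth of $|\mathcal{R}_n|$ for summability, is a welcome clarification rather than a departure.
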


\begin{figure}
\includegraphics[scale=1.1]{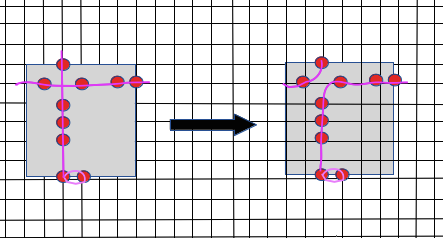} \caption{Given a collection of paths inside a grey square, we perform the uncrossing
procedure.}
\label{uncrossed2} 
\end{figure}
\begin{figure}
\includegraphics[scale=1.4]{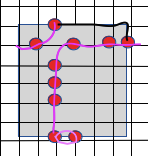} \caption{Given a collection of (non-oriented) paths (magenta coloured paths
on the picture) without essential crossings, we join them together
in one single path, adding new paths in the boundary of the square
(on this picture there is only one additional path, shown in black).
The absence of essential uncrossings guarantees that the total length
of new paths is at most the perimeter of the square (that is, at most
4C).}
\label{join} 
\end{figure}
\begin{proof}

Take a large integer $C$ and divide $\mathbb{Z}^{2}$ into boxes
of side length $C$. Consider those boxes with non-empty intersection
with the range $\mathcal{R}_{n}$.

We first show an upper bound on $\ell_{{\rm TS}}(\mathcal{DR}_{n})$
by describing a path that visits all points in $\mathcal{DR}_{n}$.
Take a path that consists of the union $T_{n}$ of the perimeters
of the boxes with non-empty intersection with $\mathcal{R}_{n}$.
We say a box $\alpha$ is fully visited if $\alpha\subseteq\mathcal{R}_{n}$.
Suppose $\alpha$ is a box not full, we take a path that starts at
a point of $\alpha\cap\partial\mathcal{R}_{n}$, visits all points
of $\mathcal{R}_{n}\cap\alpha$ and returns to its staring point. The
length of such path is at most $2|\mathcal{R}_{n}\cap\alpha|$.
If a box $\alpha$ is full, then inside $\alpha$ we are in the situation
of TSP of a diluted lattice. Therefore, to visit all points in $\mathcal{D}\mathcal{R}_{n}$,
we can go along the perimeter $T_{n}$, and in each box $\alpha$
we take a shortest path visiting all points inside $\mathcal{DR}_{n}|_{\alpha}$
then return to $T_{n}$. The length of such a path is bounded by 
\begin{equation}
\ell_{{\rm TS}}(\mathcal{DR}_{n})\le\sum_{\alpha\mbox{ full}}\ell_{\mathrm{TS}}\left(\mathcal{DR}_{n}|_{\alpha}\right)+\sum_{\alpha\ \mbox{not full}}2|\mathcal{R}_{n}\cap\alpha|+2|T_{n}|.\label{eq:upper1}
\end{equation}
If a box is not full, then it must contain some point in $\partial\mathcal{R}_{n}$.
Therefore the number of boxes which are  not full is bounded by $\left|\partial\mathcal{R}_{n}\right|$
and 
\[
\sum_{\alpha\ \mbox{not full}}|\mathcal{R}_{n}\cap\alpha|\le C^{2}|\partial\mathcal{R}_{n}|.
\]
Similarly the length of the perimeter union $T_{n}$ is bounded by
\[
|T_{n}|\le4|\partial\mathcal{R}_{n}|+4\left|\mathcal{R}_{n}\right|/C.
\]
Plugging these bounds into (\ref{eq:upper1}), we obtain
\begin{equation}
{\rm \ell_{{\rm TS}}}\left(\mathcal{DR}_{n}\right)\le\sum_{\alpha\mbox{ full}}{\rm \ell_{{\rm TS}}}\left(\mathcal{DR}_{n}|_{\alpha}\right)+(2C^{2}+4)|\partial\mathcal{R}_{n}|+4|\mathcal{R}_{n}|/C.\label{eq:upper2}
\end{equation}
The main contribution to the length of the path comes from  the TSP paths
in full boxes. Denote by $D_{C}$ the diluted lattice obtained from
a $C\times C$ square where each vertex of the square is present independently
with probability $1/2$. By the law of large numbers, 
\begin{equation}
\frac{\sum_{\alpha\mbox{ full}}{\rm \ell_{{\rm TS}}}\left(\mathcal{DR}_{n}|_{\alpha}\right)}{C^{2}\cdot\#\mbox{ of full boxes in }\mathcal{R}_{n}}\to\frac{\mathbb{E}\left[\ell_{{\rm TS}}\left(D_{C}\right)\right]}{C^{2}}\mbox{ a.s. when }n\to\infty.\label{eq:limitfull}
\end{equation}

Now we turn to the lower bound. Take a global TSP path $P$ in the standard generating set 
$\left\{ \pm e_{1},\pm e_{2}\right\} $ of $\mathbb{Z}^2$
which visits all the points
in the diluted range $\mathcal{D}\mathcal{R}_{n}$. Consider a box
$\alpha$ that is fully visited and restrict the path $P$ to $\alpha$.
We obtain a collection of connected paths where each path starts and
ends on the boundary of $\alpha$. We now apply the Uncrossing Lemma
\ref{uncross} to this collection of paths and connect the uncrossed
paths along the perimeter of $\alpha$, using Lemma \ref{joining1}.
It follows that there is a connected path of length at most $\left|P|_{\alpha}\right|+6C$
which visits all points in $\mathcal{DR}_{n}|_{\alpha}$. Therefore,
we have that for each fully visited box $\alpha$, 
\[
\left|P|_{\alpha}\right|\ge{\rm \ell_{{\rm TS}}}\left(\left|P|_{\alpha}\right|\right)-6C.
\]
Summing up over all fully visited box, we have 
\begin{equation}
{\rm \ell_{TS}}\left(\mathcal{DR}_{n}\right)\ge\sum_{\alpha\mbox{ full}}\left({\rm \ell_{TS}}\left(\mathcal{DR}_{n}|_{\alpha}\right)-6C\right).\label{eq:lower1}
\end{equation}

Finally we combine the upper and lower bounds. Given any $\epsilon>0$,
by Lemma \ref{dilutedlemma1}, we can take a constant $C>100/\epsilon$
sufficiently large such that 
\[
\left|\frac{\mathbb{E}\left[\ell_{\mathrm{TS}}\left(D_{C}\right)\right]}{C^{2}}-\alpha_{1/2}\right|<\epsilon/100.
\]
Then by combining (\ref{eq:upper2}), (\ref{eq:limitfull}) and (\ref{eq:lower1})
and applying the result that  $\mathcal{R}_{n}$ forms
a F{\o}lner sequence almost surely, we conclude that almost surely, 
\[
\limsup_{n\to\infty}\left|\frac{{\rm \ell_{TS}}(\mathcal{DR}_{n})}{|\mathcal{R}_{n}|}-\alpha_{1/2}\right|<\epsilon.
\]
Since $\epsilon>0$ is arbitrary, we have proved the statement.

\end{proof}

\begin{proof}[Proof of Theorem \ref{main} for standard SWS measure
and standard generating set]

Recall that $(X_{n})_{n=0}^{\infty}$ denotes a $\mu$-random walk
on $G$ and we write $X_{n}=\left(\bar{X}_{n},\Phi_{n}\right)$. The
distribution of support of $\Phi_{n}$ is the same as the diluted
range $\mathcal{DR}_{n}$ with parameter $p=1/2$. For the group element
$X_{n}$, a path in $S\cup S^{-1}$ that connects $id_{G}$ to $X_{n}$
satisfies that its projection to the base $\mathbb{Z}^{2}$ visits
all the points in ${\rm supp}\Phi_{n}$ and in the end arrives at
$\bar{X}_{n}$; and along the way switch on the lamps in ${\rm supp}\Phi_{n}$.
Therefore, we have that 
\[
{\rm \ell_{TS}}({\rm supp}\Phi_{n})+|{\rm supp}\Phi_{n}|\le\ell_{S}(X_{n})\le{\rm \ell_{TS}}({\rm supp}\Phi_{n})+|{\rm supp}\Phi_{n}|+3\max_{1\le t\le n}|\bar{X}_{t}|_{\bar{S}}.
\]
Recall that ${\rm supp}\Phi_{n}$ can be identified with $\mathcal{DR}_{n}$.
Since $\max_{1\le t\le n}|\bar{X}_{t}|_{\bar{S}}/(n/\log n)\to0$
almost surely when $n\to\infty$, by Lemma \ref{dilutedlemma2} and
the Dvoretzky-Erd{\"{o}}s law of large numbers for the range (\ref{eq:erdosdovret}),
we have 
\begin{align*}
\lim_{n\to\infty}\frac{1}{n/\log n}\ell_{S}(X_{n}) & =\lim_{n\to\infty}\frac{1}{n/\log n}{\rm \ell_{{\rm TS}}}({\rm supp}\Phi_{n})+\lim_{n\to\infty}\frac{1}{n/\log n}|{\rm supp}\Phi_{n}|.\\
 & =\left(\alpha_{1/2}+\frac{1}{2}\right)\pi.
\end{align*}

\end{proof}

\section{Uncrossing lemma for wreath product $\mathbb{Z}^{2}\wr(\mathbb{Z}/2\mathbb{Z})$
and general generating set}

In this section we consider an arbitrary finite generating set $S$
of $G=\mathbb{Z}^{2}\wr(\mathbb{Z}/2\mathbb{Z})$. By an $S$-path
we mean a path in the right Cayley graph of $\left(G,S\right)$, denote
by $(x_{0},x_{1},\ldots,x_{k}),$where each $x_{i}\in G$ and $x_{i-1}^{-1}x_{i}\in S$.
Recall that an element in the wreath product is denoted by $(x,f)$,
where $x\in\mathbb{Z}^{2}$ and $f:\mathbb{Z}^{2}\to\mathbb{Z}/2\mathbb{Z}$
is a function of finite support. Since the lamp group is $\mathbb{Z}/2\mathbb{Z}$,
the configuration $f$ can be identified with the support of $f$,
that is, the finite set where the value of $f$ is $1$. Given a finite
set $U$ in $\mathbb{Z}^{2}$, consider an $S$-path $(x_{0},x_{1},\ldots,x_{k})$
in $\mathbb{Z}^{2}\wr(\mathbb{Z}/2\mathbb{Z})$ such that $x_{0}$
is of the form $\left(z,{\bf 0}\right)$ and the support of the lamp
configuration of $x_{k}$ is exactly $U$. Denote by ${\rm \ell}_{{\rm TS}}^{(G,S)}(U)$
the length of the shortest $S$-path with this property.

Analogous to Lemma \ref{dilutedlemma1}, we have the following convergence
of expectation of length of traveling salesman $S$-paths after scaling
for the diluted lattice in a square of side length $n$. Consider
a square of size $n\times n$ in $\mathbb{Z}^{2}$ with the lower
left corner at $(0,0)$. Take the diluted lattice $D_{n}$ with parameter
$p$ inside the square where each vertex of the lattice is present
independently with probability $p$.

\begin{lemma}\label{dilutedS1}

There exists a constant $\alpha_{p,S}>0$ such that $\mathbb{E}\left[{\rm \ell}_{{\rm TS}}^{(G,S)}(D_{n})\right]/n^{2}\to\alpha_{p,S}$
when $n\to\infty$.

\end{lemma}

\begin{proof}

Write $F_{n}=\mathbb{E}\left[{\rm \ell}_{{\rm TS}}^{(G,S)}(D_{n})\right]$.
For a square of size $2^{n+1}\times2^{n+1}$, subdivide it into $4$
squares. In each sub-square $C_{i}$, take a shortest $S$-path $P_{i}$
where the end point $B_{i}$ of $P_{i}$ is such that $B_{i}=(b_{i},f_{i}),$
${\rm supp}f_{i}$ is exactly the diluted lattice in $C_{i}$. Write
$\left(a_{i},{\bf 0}\right)$ for the starting point of $P_{i}$.
Then one can find an $S$-path visiting all points in the diluted
lattice of the original square by following the paths $P_{1},P_{2},P_{3}$
and $P_{4}$, joining their starting/ending points by the shortest
$S$-paths connecting $(b_{1},{\bf 0})$ to $(a_{2},{\bf 0})$, $(b_{2},{\bf 0})$
to $\left(a_{3},{\bf 0}\right)$ and $(b_{3},{\bf 0})$ to $(a_{4},{\bf 0})$.
Therefore 
\[
F_{2^{n+1}}\le4F_{2^{n}}+C2^{n},
\]
where the constant $C$ only depends on $S$. Write $b_{n}=F_{2^{n+1}}/2^{2n}$,
then $b_{n+1}\le b_{n}+C2^{-n-2}$. In particular, the sequence $b_{n}$
is almost monotone and therefore converges to a limit $\alpha_{p,\bar{S}}$.
Since $F_{2^{n}}\ge\mathbb{E}\left[\left|D_{n}\right|\right]=p2^{2n}$,
we have that $\alpha_{p}\ge p>0$. To extend the convergence to general
$n$, we argue in the same way to establish bounds of the form (\ref{eq:dyadic1})
and (\ref{eq:dyadic2}) as in Lemma \ref{dilutedlemma1}.

\end{proof}

Given an $S$-path $P=(x_{0},x_{1},\ldots,x_{\ell})$, denote by $\tau_{P}$
the difference between lamp configurations of $x_{\ell}$ and $x_{0}$,
that is, $\tau_{P}:=f_{x_{\ell}}-f_{x_{0}}$. Write $|P|=\ell$ for
the length of the path $P$. Given a collection $\mathcal{A}$ of
$S$-paths, we refer to the sum of configurations $\sum_{P\in\mathcal{A}}\tau_{p}$ as
the lamp configuration produced by $\mathcal{A}$.

\begin{remark}\label{reverse}

The fact that the lamp group is $\mathbb{Z}/2\mathbb{Z}$ allows us
to reverse the direction of paths. If we reverse the direction of
$P$ to the path $\overleftarrow{P}=(x_{\ell},x_{\ell-1},\ldots,x_{1},x_{0})$,
then the path $\overleftarrow{P}$ makes the same changes in the lamp
configuration as $P$, while in the projection to $\mathbb{Z}^{2}$
the starting and ending points are swapped. This is only valid when
the lamp group is $\mathbb{Z}/2\mathbb{Z}$.

\end{remark}

Next we explain how to perform uncrossings to $S$-paths. Recall that
in Section \ref{sec:standard} we have defined essential crossings
for standard paths in $\mathbb{Z}^{2}$. The notion can be extended
to $S$-paths. Two $S$-paths are said to have \emph{essential crossings}
if their projections to $\mathbb{Z}^{2}$ have essential crossings.
We say two $\bar{S}$-paths $P_{1}$ and $P_{2}$ have an essential
crossing in $D$, where $P_{i}$ starts at $A_{i}$ and ends at $B_{i}$,
if $A_{1},A_{2},B_{1},B_{2}\in\partial D$, and $A_{2}$ and $B_{2}$
are contained in the two different open arcs of $\partial D$ between
$A_{1}$ and $B_{1}.$ Here $\partial D$ denotes the boundary of
$D$ with respect to the standard generating set $\{e_{1},e_{2}\}$.

The following lemma is a generalization of the uncrossing Lemma \ref{uncross}
to $S$-paths. Consider a configuration $f$ with ${\rm supp}f=U$.
Recall that for the standard generating set considered in Section
\ref{sec:standard}, the length of the shortest path that turns on
the prescribed configuration $f$, whose starting and ending point
in $\mathbb{Z}^{2}$ are $z_{1}$ and $z_{2}$ respectively, is exactly the
length of a traveling salesman path which starts at $z_{1}$, visits
every point in ${\rm supp}f$, and ends at $z_{2}$, plus $\left|{\rm supp}f\right|$.
And for the standard SWS random walk considered in Section \ref{sec:standard},
the configuration at time $n$ is the diluted range $\mathcal{DR}_{n}$
with parameter $1/2$. Now in the general case (as in the setting
of Lemma \ref{uncross2} below) we need to control the shortest path
to produce the configuration, where possible moves are associated
to the generating set $S$ (that is, using $S$-paths). For example,
these moves can be of the following form: first jump on $\mathbb{Z}^{2}$
at distance $2$ on the right, turn on the lamp at distance $5$ above
the marker on $\mathbb{Z}^{2},$ and then jump at distance $3$ down.
Any finite generating set $S$ can be interpreted as such rules, with
finitely many jumps and finitely many turnings on the lamps. Therefore
the following lemma is formulated in terms of wreath products (since
it uses $S$-paths) and gives the statement similar to Lemma \ref{uncross}
for an arbitrary generating set.

\begin{lemma}[Uncrossing lemma in terms of wreath product generators]\label{uncross2}

Let $D$ be an $C\times C$ square in $\mathbb{Z}^{2}$. Consider
a collection $\mathcal{A}$ of $S$-paths where the projection of
each path to $\mathbb{Z}^{2}$ starts and terminates on the boundary
of $D$. Then one can find another collection $\mathcal{B}$ of $S$-paths
such that there are no essential crossings between paths in $\mathcal{B}$,
each path in $\mathcal{B}$ starts and terminates at $\partial D$,
and $\mathcal{B}$ produces the same lamp configuration as $\mathcal{A}$.
Moreover the total length of $S$-paths in $\mathcal{B}$ is bounded
by 
\[
\sum_{Q\in\mathcal{B}}|Q|\le\sum_{P\in\mathcal{A}}|P|+c|\partial D|,
\]
where $c$ is a constant only depending on $S$.

\end{lemma}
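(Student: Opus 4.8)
The plan is to run the proof of the standard Uncrossing Lemma \ref{uncross} on the projections of the $S$-paths to $\mathbb{Z}^{2}$, replacing the exact geometric uncrossing by a \emph{local surgery} on $S$-paths that preserves the produced lamp configuration at an additive cost bounded only in terms of $S$. Throughout I will use that the lamp switches of an $S$-path depend only on the sequence of marker positions and the generators used, not on the ambient lamp state, so that $\tau_{P}$ of a subpath is well defined and concatenation of two $S$-paths meeting at a common marker adds their $\tau$'s.

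First I would carry out the reduction step exactly as in Lemma \ref{uncross}: using Remark \ref{reverse} to reverse directions when needed (which preserves each $\tau_{P}$ since the lamp group is $\mathbb{Z}/2\mathbb{Z}$), I join any two paths whose projected endpoints coincide, producing a collection $\mathcal{A}'$ with pairwise disjoint endpoints on $\partial D$. By the previous remark this step preserves the total produced configuration $\sum_{P}\tau_{P}$ and does not increase total length. Each non-loop path now occupies two distinct boundary points, so the number $m$ of paths satisfies $m\le|\partial D|$; loops never participate in essential crossings and are set aside.

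Next I would run the ``good path'' induction of Lemma \ref{uncross}(ii), which performs at most $m$ uncrossings; the only new ingredient is how to uncross two $S$-paths $P_{1},P_{2}$ whose projected endpoints are interleaved on $\partial D$. Since each generator displaces the marker by at most $R:=\max_{s\in S\cup S^{-1}}|\bar{s}|$, I form the polygonal curves $\gamma_{1},\gamma_{2}$ joining consecutive marker positions; as all markers lie in the convex box $D$, these curves lie in $D$, and interleaved endpoints force (Jordan curve theorem) an intersection point $O$ lying on a segment $[\bar{x}_{i},\bar{x}_{i+1}]$ of $\gamma_{1}$ and a segment $[\bar{y}_{j},\bar{y}_{j+1}]$ of $\gamma_{2}$, each of length $\le R$. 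Hence the four markers $\bar{x}_{i},\bar{x}_{i+1},\bar{y}_{j},\bar{y}_{j+1}$ lie within distance $2R$ of one another. I then swap the tails of $P_{1}$ and $P_{2}$ at this near-crossing, bridging the bounded gaps $\bar{x}_{i}\rightsquigarrow\bar{y}_{j+1}$ and $\bar{y}_{j}\rightsquigarrow\bar{x}_{i+1}$ by $S$-paths. The two removed steps $\bar{x}_{i}\to\bar{x}_{i+1}$ and $\bar{y}_{j}\to\bar{y}_{j+1}$ switch lamps inside a ball of radius $O(R)$; I load the two bridges so that together they reproduce exactly these switches, which makes $\sum\tau$ unchanged. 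Each bridge realizes a group element of bounded displacement and bounded lamp support, so by translation invariance of the Cayley graph of $(G,S)$ its $S$-length is bounded by a constant $c_{1}=c_{1}(S)$; thus each uncrossing adds at most $c_{1}$ to the total length (when $O$ is already a common marker the bridges are trivial and this is the standard uncrossing). As in Lemma \ref{uncross}, after at most $m\le|\partial D|$ operations no two paths have an essential crossing and all new endpoints remain on $\partial D$, so taking $c:=c_{1}$ yields $\sum_{Q\in\mathcal{B}}|Q|\le\sum_{P\in\mathcal{A}}|P|+c|\partial D|$.

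The main obstacle is precisely this surgery step: with jumps an essential crossing no longer produces a genuine common marker, so one cannot simply split and concatenate at $O$. The two facts that rescue the argument are the finiteness of $S$ (which confines the crossing to a region of diameter $O(R)$ and bounds the bridge lengths) and that the lamp group is $\mathbb{Z}/2\mathbb{Z}$ (Remark \ref{reverse}), which both allows free reversal of paths to align endpoints and lets the removed gap-switches be absorbed into the bridges with no parity obstruction. A secondary point to verify is that the markers genuinely remain in the convex box $D$, so that the Jordan curve argument applies and the intersecting segments have length $\le R$; this is where it matters that the paths in $\mathcal{A}$ arise as marker-in-$D$ segments, and excursions of size $\le R$ near $\partial D$ are harmless and absorbed into $c$.
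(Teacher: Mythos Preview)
Your proposal is correct and follows essentially the same approach as the paper's proof: reduce to a collection $\mathcal{A}'$ with pairwise distinct boundary endpoints (so that $m\le|\partial D|$), then run the induction of Lemma~\ref{uncross}(ii), performing at most $|\partial D|$ uncrossings each at bounded additive cost. The only cosmetic difference is that you detect the crossing using Euclidean polygonal curves between consecutive marker positions, whereas the paper refines each $S$-step to a shortest path in the standard generating set $\{\delta,(e_1^{\pm1},{\bf 0}),(e_2^{\pm1},{\bf 0})\}$ and finds a genuine lattice intersection point $O$; in both cases the nearby $S$-path markers are within distance $O(R)$ of the crossing and are bridged by an $S$-path of bounded length. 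You are actually a bit more explicit than the paper about loading the bridges with the lamp switches of the two removed $S$-steps so that $\tau_{Q_1}+\tau_{Q_2}=\tau_{P_1}+\tau_{P_2}$, which is the right way to justify that equality.
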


\begin{proof}

Given an $S$-path $P=\left(x_{0},x_{1},\ldots,x_{k}\right)$, where
each increment $x_{i-1}^{-1}x_{i-1}\in S$, connect each pair $(x_{i-1},x_{i})$
by a shortest path in the standard generating set $\left\{ \delta,\left(e_{1}^{\pm1},{\bf 0}\right),\left(e_{2}^{\pm1},{\bf 0}\right)\right\} $.
Denote by $\hat{P}$ the new path in the standard generating set.
Given two $S$-paths $P_{1}$ and $P_{2}$ in a domain $D$, where
$P_{i}$ starts at $A_{i}$ and ends at $B_{i}$, we perform un uncrossing
as follows. First take the paths $\hat{P}_{1}$ and $\hat{P}_{2}$
in the standard generating set described above associated with $P_{1}$
and $P_{2}$. If $\hat{P}_{1}$ and $\hat{P}_{2}$ have essential
crossings, then we take an intersection point $O$ and perform the
uncrossing around $O$ as described in Subsection \ref{subsec:uncrossing1}.
This way we obtain two paths without essential crossing: $\hat{Q}_{1}$
which is the concatenation of the sub-path of $\hat{P}_{1}$ from
$A_{1}$ to $O$ and the sub-path of $\hat{P}_{2}$ from $O$ to $B_{2}$;
and $\hat{Q}_{2}$ which is the concatenation of the sub-path of $\hat{P}_{2}$
from $A_{2}$ to $O$ and the sub-path of $\hat{P}_{1}$ from $O$
to $B_{1}$. Now we convert the paths $\hat{Q}_{1},\hat{Q_{2}}$ in
the standard generating set back to $\bar{S}$-paths. If $O$ lies
on both $P_{1}$ and $P_{2}$, then let $Q_{1}$ be the concatenation
of the sub-path of $P_{1}$ from $A_{1}$ to $O$ and the sub-path
of $P_{2}$ from $O$ to $B_{2}$; and similarly for $Q_{2}.$ If
$O$ is not on $P_{1},$ then let $y_{1}$ be the  closest point
to $O$ which belongs to the intersection of $P_{1}$ and the sub-path
of $\hat{P}_{1}$ from $A_{1}$ to $O$. Similarly, let $z_{1}$ the
closest point to $O$ which belongs to the intersection of $P_{2}$
and the sub-path of $\hat{P}_{2}$ from $O$ to $B_{2}$. Connect
$y_{1}$ to $z_{1}$ by an $S$-path of shortest length. Then we define
$Q_{1}$ to be the concatenation of the $S$-path from $A_{1}$ to
$y_{1}$ along $P_{1}$, the $S$-path connecting $y_{1}$ to $z_{1}$
and the $S$-path from $z_{1}$ to $B_{2}$ along $P_{2}$. In the
same manner we obtain an $S$-path $Q_{2}$ from $A_{2}$ to $B_{1}$.

By the description above, it is clear that $\tau_{Q_{1}}+\tau_{Q_{2}}=\tau_{P_{1}}+\tau_{P_{2}}$
and moreover 
\[
|Q_{1}|+|Q_{2}|\le|P_{1}|+|P_{2}|+c,
\]
where the additive constant $c$ only depends on the generating set
$S$.

Consider a finite collection $\mathcal{A}$ of $S$-paths whose projection
to $\mathbb{Z}^{2}$ starts and ends at $\partial D$. If there are
two paths $P_{1}$ and $P_{2}$ such that $\left\{ A_{1},B_{1}\right\} \cap\left\{ A_{2}.B_{2}\right\} \neq\emptyset$,
then reversing the direction of one of them if necessary (this is
allowed because the lamp group is $\mathbb{Z}/2\mathbb{Z},$ see Remark
\ref{reverse}), we replace them by the path which is the concatenation
of $P_{1},P_{2}$. By performing this repeatedly, we can replace $\mathcal{A}$
by $\mathcal{A}'$ such that the starting/ending points of the paths
are pairwise disjoint. By the inductive argument in the proof of Lemma
\ref{uncross} (ii), the number of uncrossings we need to perform
is bounded by $\left|\partial D\right|$. Since each uncrossing costs
at most an additive constant $c$, this implies the statement of the
lemma.

\end{proof}

\section{The general case and proof of Theorem \ref{main}\label{sec:The-general-case}}

Throughout this section, let $\mu$ be a symmetric non-degenerate
probability measure on $G=\mathbb{Z}^{2}\wr(\mathbb{Z}/2\mathbb{Z})$
with finite second moment; and $S$ be a finite generating set of
$G$. By raising $\mu$ to a convolution power if necessary, we may
assume that $\mu(id)>0$ and $\mu(\delta)>0$, where $\delta=\left(\delta_{0}^{1},{\bf 0}\right)$.
Recall that $l_{S}$ denotes the word length with respect to the generating
set $S$.  Recall that one of the assumptions of Theorem \ref{main} is that
$\mu$ has $(2+\epsilon)$-moment for some $\epsilon>0$

Let $\pi$ be the projection $G\to\mathbb{Z}^{2}$.
Denote by $\bar{\mu}$ the projection of $\mu$ to $\mathbb{Z}^{2}$.
Recall that $\left(X_{n}\right)_{n=0}^{\infty}$ is a random walk
with step distribution $\mu$ on $G$ and we write $X_{n}=\left(\bar{X}_{n},\Phi_{n}\right)$,
where $\bar{X}_{n}=\pi\left(X_{n}\right)$ is the projection to $\mathbb{Z}^{2}$
and $\Phi_{n}$ is the lamp configuration of $X_{n}.$ Under our assumptions,
by \cite[Theorem 12]{DGK}, the range process $\left(R_{n}\right)_{n=0}^{\infty}$
of the $\bar{\mu}$-random walk on $\mathbb{Z}^{2}$ forms a sequence
of F{\o}lner  sets almost surely: more precisely, there exists a constant
$c>0$ such that 
\[
\lim_{n\to\infty}\frac{\left|\partial R_{n}\right|}{n/\log^{2}n}=c\mbox{ a.s.}
\]

We proceed by subdividing the $\mathbb{Z}^{2}$-lattice into boxes
of side length $c_{n}$, where $c_{n}$ is an integer depending on
$n$ (to be specified later). We say a $(c_{n}\times c_{n})$-box $\alpha$
is visited up to time instant $n$ if is $\alpha\cap R_{n}\neq\emptyset$.
Note that in a box $\alpha$ where every point has been visited by the
$\bar{\mu}$-random walk up to time $n$, the distribution of the
lamp configuration in $\alpha$ is in general not uniform on $\{0,1\}^{\alpha}$.
Instead we use that if the box is visited enough times, then the distribution of the lamp configuration 
is close to uniform and the TSP of the configuration can be controlled.
For this purpose we need the following two lemmas.

Write $a=\min\left\{ \mu(id),\mu(\delta)\right\} $ and $\mu=au+(1-a)\mu'$,
where $u$ is the uniform measure on $\{id,\delta\}$. Let $(Y_{n})_{n=1}^{\infty}$
be a sequence of i.i.d. Bernoulli random variables, with $\mathbb{P}(Y_{i}=1)=a$
and $\mathbb{P}\left(Y_{i}=0\right)=1-a$. Let $(Z_{n})_{n=1}^{\infty}$
be a sequence of a sequence of i.i.d. random variables with distribution
$\mu'$ and $\left(U_{n}\right)_{n=1}^{\infty}$ be a sequence of i.i.d.
random variables with distribution $u$. Then $ $ 
\[
\tilde{Z}_{n}=Z_{n}{\bf 1}_{\{Y_{n}=0\}}+U_{n}{\bf 1}_{\{Y_{n}=1\}}
\]
has distribution $\mu$. We think of the $\mu$-random walk increments
sampled in this way. Write $\tilde{W}_{n}=\tilde{Z}_{1}\ldots\tilde{Z}_{n}$.
We say  that a location $x\in\mathbb{Z}^{2}$ receives a good multiplication
up to time instant $n$ if there is a time $t\le n$ such that $\pi\left(\tilde{W}_{t-1}\right)=x$
and $Y_{t}=1$.

By deleting the steps with $Y_{n}=1$ in the sequence $\left(\tilde{Z}_{n}\right)$
we obtain a sequence of i.i.d. random variables with distribution
$\mu'$. Denote by $\left(W'_{t}\right)_{t=0}^{\infty}$ the random
walk on $G$ associated with this sequence of increments. We say a
box $\alpha$ is $q$-fully visited by the $\mu'$-walk by the time
$n$ if for every $x\in\alpha$, 
\[
\left|\left\{ t\le n:\pi\left(W_{t}'\right)=x\right\} \right|\ge q.
\]
Denote by $F_{n,q}^{\alpha}$ the event that the box $\alpha$ is
$q$-fully visited by the $\mu'$-walk up to time instant  $n$.

\begin{lemma}\label{closeuniform}

Given a $c_{n}\times c_{n}$ box $\alpha$, denote by $B_{n}^{\alpha}$
the set of vertices in $\alpha$ which did not have any good multiplication
up to time $n$. For any $\epsilon>0$, there exists constants $\lambda_{1},\lambda_{2}>0$
depending only on $a=\min\{\mu(id),\mu(\delta)\}$ and $\epsilon$,
such that 
\[
\mathbb{P}\left(\left\{ |B_{n}^{\alpha}|\ge(1+\epsilon)(1-a)^{q}|\alpha|\right\} \cap F_{(1-a-\epsilon)n,q}^{\alpha}\right)\le e^{-\lambda_{1}n}+e^{-\lambda_{2}|\alpha|}.
\]

\end{lemma}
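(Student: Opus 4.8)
The plan is to exploit the fact that the good steps (those with $Y_t=1$) do not move the walker in $\mathbb{Z}^2$, since both $id$ and $\delta$ act trivially on the $\mathbb{Z}^2$-coordinate. Thus each $\mu'$-step is preceded by a block of good steps during which the projected walk sits at a fixed site, and that block offers an independent chance of a good multiplication there. I would first record a clean generating representation. Let $(g_j)_{j\ge 1}$ be i.i.d.\ with $\mathbb{P}(g_j=r)=a^{r-1}(1-a)$ for $r\ge 1$, independent of the sequence $(Z_j)_{j\ge 1}$ of $\mu'$-increments, where $g_j-1$ is the number of good steps immediately preceding the $j$-th $\mu'$-step. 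Then the $j$-th $\mu'$-step occurs at $\tilde W$-time $t_j=g_1+\cdots+g_j$, the projected walk $\pi(W'_j)$ depends only on $(Z_j)$, and a site $x$ receives a good multiplication from the $j$-th block exactly when $\pi(W'_{j-1})=x$ and $g_j\ge 2$. Writing $\beta_j=\mathbf{1}[g_j=1]$ for the block-emptiness indicators, the $\beta_j$ are i.i.d.\ Bernoulli$(1-a)$ and independent of $(Z_j)$.

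Next I would set $m=\lfloor(1-a-\epsilon)n\rfloor$ and split off the event $E=\{t_m\le n\}$. Since $t_m$ is a sum of $m$ i.i.d.\ geometric variables of mean $m/(1-a)=(1-\epsilon/(1-a))n<n$, a standard Cram\'er large-deviation bound gives $\mathbb{P}(E^c)\le e^{-\lambda_1 n}$ with $\lambda_1=\lambda_1(a,\epsilon)$; this produces the first term. On $E$ all $m$ blocks lie within $\tilde W$-time $n$, so if $x\in\alpha$ has no good multiplication up to time $n$ then $g_j=1$ for every $j\le m$ with $\pi(W'_{j-1})=x$. Hence on $E$ we have the inclusion $B_n^\alpha\subseteq\tilde B$, where $\tilde B:=\{x\in\alpha:\ g_j=1\text{ for all }j\le m\text{ with }\pi(W'_{j-1})=x\}$.

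The heart of the argument is a decoupling across sites. For each $x$ set $V_x=\{j\le m:\pi(W'_{j-1})=x\}$; these index sets are pairwise disjoint because each $\mu'$-step sits at a single location. Conditioning on $(Z_j)$ fixes the $V_x$ while leaving the $\beta_j$ i.i.d.\ Bernoulli$(1-a)$, so the events $\{x\in\tilde B\}=\{\beta_j=1\ \forall j\in V_x\}$ depend on disjoint blocks of the $\beta_j$ and are \emph{conditionally independent}, with $\mathbb{P}(x\in\tilde B\mid(Z_j))=(1-a)^{|V_x|}$. On the event $F^\alpha_{(1-a-\epsilon)n,q}$, which depends only on $(Z_j)$, every $x\in\alpha$ satisfies $|V_x|\ge q$, so each conditional probability is at most $(1-a)^q$ and $|\tilde B|$ is conditionally stochastically dominated by a Binomial$(|\alpha|,(1-a)^q)$ variable. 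A Chernoff bound on the upper tail then gives $\mathbb{P}(|\tilde B|\ge(1+\epsilon)(1-a)^q|\alpha|\mid(Z_j))\le e^{-\lambda_2|\alpha|}$ for a suitable $\lambda_2=\lambda_2(a,\epsilon,q)>0$; integrating against $\mathbf{1}_{F^\alpha_{(1-a-\epsilon)n,q}}$ and invoking $B_n^\alpha\subseteq\tilde B$ on $E$ yields the second term.

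I expect the main obstacle to be making the decoupling rigorous through the geometric-gap representation: one must verify carefully that the good steps leave $\pi$ unchanged, that the blocks attached to distinct $\mu'$-steps use disjoint portions of the driving randomness, and that $(\beta_j)$ is independent of $(Z_j)$, so that conditioning on the projected $\mu'$-walk genuinely renders the site events independent. Everything else, namely the large-deviation estimate converting $\mu'$-time into $\tilde W$-time and the binomial Chernoff estimate, is routine. A minor point is that $\lambda_2$ depends on $q$ through $(1-a)^q$; since $q$ is a fixed constant in the application this is harmless.
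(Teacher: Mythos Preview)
Your argument is correct and follows essentially the same route as the paper: both use the geometric-gap representation to couple $\mu'$-time with $\tilde W$-time via a Chernoff bound (your $t_m$ is exactly the paper's $S_{n,\epsilon}$), and then bound the number of sites without a good multiplication by a Chernoff/binomial estimate under the $q$-full-visit assumption. Your version is in fact more explicit than the paper's about the conditional independence across sites (via the disjoint index sets $V_x$) that justifies the second Chernoff bound, and your remark that $\lambda_2$ inevitably depends on $q$ through $(1-a)^q$ is accurate and harmless for the application.
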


\begin{proof}

Let $(N_{i})_{i=1}^{\infty}$ be an i.i.d. sequence of random variables
with distribution $\mathbb{P}(N_{i}=0)=1-a$, $\mathbb{P}(N_{i}=k)=a^{k}(1-a)$
for $k\in\mathbb{N}$, independent of the $\mu'$-random walk $\left(W_{t}'\right)$.
Write $S_{n,\epsilon}=N_{1}+\ldots+N_{(1-a-\epsilon)n}+(1-\alpha-\epsilon)n$.
Denote by $E_{n}$ the event that $S_{n,\epsilon}>n$. By the Chernoff
bound we have that $\mathbb{P}(E_{n})\le e^{-\lambda_{1}n}$ for some constant $\lambda_1$
that only depends on $a$. 

If the box $\alpha$ is $q$-fully visited by the $\mu'$-walk by
the time $(1-\alpha-\epsilon)$, then at time $S_{n,\epsilon}$ for
the walk $\left(\tilde{W}_{t}\right)$, the probability that a vertex
$x$ has not received a good multiplication is bounded from above
by 
\[
\mathbb{P}\left(N_{1}+\ldots+N_{q}=0\right)=(1-a)^{q}.
\]
Therefore by the Chernoff bound, we have 
\[
\mathbb{P}\left(\left\{ B_{S_{n,\epsilon}}^{\alpha}\ge(1+\epsilon)(1-a)^{q}|\alpha|\right\} \cap F_{(1-a-\epsilon)n,q}^{\alpha}\right)\le e^{-\lambda_{2}|\alpha|}.
\]

\end{proof}

Given two configurations $\phi_{1},\phi_{2}\in\{0,1\}^{\alpha}$ in the box $\alpha$,
denote by $d_{H}(\phi_{1},\phi_{2})$ the Hamming distance between
them, that is, $d_{H}\left(\phi_{1},\phi_{2}\right)=\left|\left\{ x\in\alpha:\ \phi_{1}(x)\neq\phi_{2}(x)\right\} \right|$.

Note that if a site $x\in\alpha$ has received a good update before time
$n$, then the lamp configuration at $x$ is uniform on $\{0,1\}$
at time $n$; moreover the configurations at such sites are independent.
The coupling is defined as: $\Psi_{n}$ takes the same value as $\Phi_n$
at a point in $\alpha$ which receives a good update before $n$;
and at a point
that does not receive a good update before time $n$, the value of
$\Psi_{n}$ is independent uniform on $\{0,1\}$. Then by definitions
we have $d_{H}\left(\Psi_{n},\Phi_{n}|_{\alpha}\right)\le|B_{n}^{\alpha}|$,
where $B_n^{\alpha}$ is the set of vertices in $\alpha$ which has not received a good update 
up to time $n$ as in Lemma \ref{closeuniform}. 

To control TSP of the configuration, we need the following classical
bound due to Bartholdi and Platzman \cite{BP82} (which they prove
by space-filling curves heuristics; similar multiscale bound for TSP
works for general graphs other than $\mathbb{Z}^{2}$, see e.g., the
inequality (17) in \cite{NaorPeres}).

\begin{lemma}[Bartholdi-Platzman \cite{BP82}]\label{multiscale}

Let $A$ be a set of $N$ points in a square of side length $M$ in
$\mathbb{Z}^{2}$. Then ${\rm \ell_{\mathrm{TS}}}(A)\le2\sqrt{NM^{2}}$.

\end{lemma}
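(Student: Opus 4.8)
The plan is to follow the space-filling curve heuristic of Bartholdi and Platzman. First I would fix a surjective curve $\psi\colon[0,1]\to[0,M]^2$ that is H\"older-$1/2$ for the $\ell_1$ metric, that is, there is a constant $C$ with $\|\psi(s)-\psi(t)\|_1\le CM\sqrt{|s-t|}$ for all $s,t\in[0,1]$. Such curves are standard (for instance the boustrophedon/strip curve used by Bartholdi--Platzman, or a Sierpi\'nski--Knopp type curve), and since $\ell_{\mathrm{TS}}$ is computed in the $\ell_1$ metric generated by $e_1,e_2$, it is this $\ell_1$ H\"older bound that I want rather than the Euclidean one.

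Given the $N$ points $A=\{x_1,\dots,x_N\}\subset[0,M]^2$, for each $x_j$ I would choose a parameter $t_j\in\psi^{-1}(x_j)$ and relabel so that $t_1\le\cdots\le t_N$. Visiting the points in this order produces a lattice path whose $\ell_1$ length is
\[
\sum_{j=1}^{N-1}\|x_{j+1}-x_j\|_1=\sum_{j=1}^{N-1}\|\psi(t_{j+1})-\psi(t_j)\|_1\le CM\sum_{j=1}^{N-1}\sqrt{t_{j+1}-t_j}.
\]
Setting $\Delta_j=t_{j+1}-t_j\ge0$, one has $\sum_{j=1}^{N-1}\Delta_j=t_N-t_1\le1$, so by Cauchy--Schwarz
\[
\sum_{j=1}^{N-1}\sqrt{\Delta_j}\le\Big(\sum_{j=1}^{N-1}\Delta_j\Big)^{1/2}(N-1)^{1/2}\le\sqrt{N}.
\]
Combining the two displays gives $\ell_{\mathrm{TS}}(A)\le CM\sqrt N=C\sqrt{NM^2}$, which is the asserted bound once the curve is chosen with $C=2$.

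The two inputs are the H\"older estimate and the concavity estimate; the latter is the elementary Cauchy--Schwarz step above, and the former is the only nontrivial point. The main obstacle is therefore producing a space-filling curve whose $\ell_1$ H\"older constant is exactly $C=2$, which is precisely the quantitative content of the Bartholdi--Platzman construction. For the applications in this paper only the order of magnitude $\ell_{\mathrm{TS}}(A)=O(M\sqrt N)$ matters (as in the remark following Lemma \ref{joining1}, such absolute constants are inessential), so I would be content to verify the H\"older bound for a concrete curve with some explicit $C$ and invoke \cite{BP82} for the optimal value. As a fully self-contained alternative that avoids the sharp constant, one can instead tile $[0,M]^2$ by a $\lceil\sqrt N\rceil\times\lceil\sqrt N\rceil$ array of cells of side $\approx M/\sqrt N$, traverse the cells in boustrophedon order, and visit the points inside each cell as one passes through: the serpentine backbone contributes $O(M\sqrt N)$, and since the cells together hold $N$ points, each within a region of $\ell_1$-diameter $O(M/\sqrt N)$, the per-point in-cell detours contribute $\sum_i O\big((M/\sqrt N)\,n_i\big)=O(M\sqrt N)$ as well, yielding the same bound with a larger but explicit constant.
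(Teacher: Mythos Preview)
Your proposal is correct and follows exactly the approach the paper attributes to Bartholdi--Platzman: the paper does not give its own proof of this lemma but simply cites \cite{BP82}, noting that it is proved ``by space-filling curves heuristics,'' which is precisely the H\"older-$1/2$ curve plus Cauchy--Schwarz argument you outline. Your observation that only the order of magnitude $O(M\sqrt{N})$ is needed for the paper's applications (the control of $\ell_{\rm TS}^{(G,S)}(B_n^\alpha)$ at the end of the proof of Theorem~\ref{main}, and the remark that $\alpha_p\to 0$ as $p\to 0^+$) is also accurate, so your fallback strip argument is entirely adequate here even without the sharp constant $C=2$.
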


The rest of this section is devoted to the proof of Theorem \ref{main}. 
\begin{proof}[Proof of Theorem \ref{main}]

Recall that we write $a=\min\left\{ \mu(id),\mu(\delta)\right\} $
and $\mu=au+(1-a)\mu'$, where $u$ is the uniform measure on $\{id,\delta\}$.
For any $\epsilon>0$, take $q$ sufficiently large such that $10(1-a)^{q}<\epsilon$.

Divide $\mathbb{Z}^{2}$ by boxes of side length $c_{n}$. We first
apply the result from \cite{DGK} that almost surely $\left(R_{n}\right)$
forms a F{\o}lner sequence, and Flatto's result in \cite{flatto}
to show that among the visited boxes, the majority of them are $q$-fully
visited by the $\mu'$-walk. More precisely, by \cite{flatto}, we have that
\[
\lim_{n\to\infty}\frac{\left|T_{n}^{q-1}\right|}{n/\log^{2}n}=c_{q-1}\mbox{ a.s.,}
\]
where $T_n^{p}$ is the number of points visited at least once and at most $p$ times by the random walk up to time $n$.
Note that the results in \cite{flatto} is proven for simple random
walk on $\mathbb{Z}^{2}$, but the proof applies to any centered random
walk on $\mathbb{Z}^{2}$ with finite $(2+\epsilon)$-moment, see
the Remark after \cite[Theorem 3.1]{flatto}. If a visited box $\alpha$
is not $q$-fully visited, then it must contain a point of $T_{n}^{q-1}$.
Therefore the number of visited boxes that are not $q$-fully visited
is bounded by $T_{n}^{q-1}$+$\left|\partial R_{n}\right|$.

In a box $\alpha$ that is $q$-fully visited by the $\mu'$-walk
up to time $n$, consider the restriction of lamp configuration $\Phi_{n}$
to $\alpha$. Recall that $B_{n}^{\alpha}$ denotes the set of vertices
in $\alpha$ without good update up to time $n$. As in Lemma \ref{closeuniform},
the configuration $\Phi_{n}|_{\alpha}$ can be written as $U_{\alpha}+\Delta_{n}^{\alpha}$,
where the distribution of $U_{\alpha}$ is uniform on $\{0,1\}^{\alpha}$
and $\Psi_{n}$ is a random configuration with support on the set
$B_{\alpha}$. It follows that to write the configuration $\Phi_{n}|_{\alpha}$
one can first write $U_{\alpha}$, then correct it by $\Delta_{n}^{\alpha}$
which is supported on $B_{\alpha}^{n}$, a subset of small size.

To prove the statement, we show that typically $|X_{n}|_{S}$ is approximated
by the contribution from $q$-fully visited box $\sum_{\alpha\mbox{ is }q\mbox{-full}}\ell_{{\rm TS}}^{(G,S)}(U_{\alpha})$
with error bounded by $\epsilon n/\log n$.

\textbf{The upper bound for $\left|X_{n}\right|_{S}$:}
we exhibit an $S$-path that connects the identity to $X_{n}$. Recall that we have divided
$\mathbb{Z}^2$ into boxes of side length $c_n$.
Consider all visited boxes and denote by $\mathcal{P}_n$ the union of the perimeters of the visited boxes.
Because the step distribution $\mu$ allows long range jumps, the union of the perimeters
of the visited boxes are not necessarily connected. We add paths to
make the union of the perimeters connected. Note that it is sufficient
to add connecting paths with total length bounded by the sum of random
walk jumps longer than $c_{n}$. Indeed, we may enumerate the visited
boxes according to the order they are visited as $\alpha_{1},\alpha_{2},...$:
$\alpha_{i}$ is the first box visited by the $\bar{\mu}$-random
walk after $\alpha_{1},...,\alpha_{i-1}$ that is different from the
previous boxes. If the box $\alpha_{k}$ is not connected to $\alpha_{1}\cup...\cup\alpha_{k-1}$,
then we consider the random walk step that first enters $\alpha_{k}$
and take a shortest path connected the starting and ending point of
this step. The union of such additional paths makes the union of the
visited boxes connected and the length of the additional paths is
bounded by 
\[
A_{n}=\sum_{i=1}^{n}|Y{}_{i}|_{S}{\bf 1}_{\{|Y_{i}|_{S}\ge c_{n}\}}.
\]

Take first an $S$-path that starts at identity and covers the perimeters of each visited box, where connecting paths are added according to the long jumps of the random walk as described above. 
To connect $id$ to $X_{n}$ by an $S$-path,
along this path in each box $\alpha_{i}$, we take the TSP of the configuration
$\Phi_{n}|_{\alpha_{i}}$ which starts and ends on the perimeter of
$\alpha_{i}$. 
Then we can bound the length of the path by 
\begin{align}
|X_{n}|_{S} & \le\sum_{\alpha\ \mbox{is }q\mbox{-full}}\left(\ell_{{\rm TS}}^{(G,S)}(U_{\alpha})+\ell_{{\rm TS}}^{(G,S)}(B_{n}^{\alpha})\right)\nonumber \\
 & +C_0[c_{n}^{2}\left|\left\{ \alpha:\alpha\mbox{ is visited but not }q\mbox{-full}\right\} \right|+|\mathcal{P}_{n}|]+A_{n},\label{eq:up}
\end{align}
where $C_0$ is a constant that only depends on the generating set $S$.
Now we choose appropriate $c_{n}$ such that the main contribution
to the quantity on the righthand side of (\ref{eq:up}) is from $\sum_{\alpha\ \mbox{is }q\mbox{-full}}\ell_{{\rm TS}}^{(G,S)}(U_{\alpha})$.
To this end, we need the expectation of $A_{n}$ to satisfy 
\begin{equation}
\mathbb{E}[A_{n}]=n\sum_{|g|>c_{n}}|g|\mu(g)\ll n/\log n;\label{eq:momenttail}
\end{equation}
and at the same time to be not too large so that the main contribution
still comes from fully visited boxes. More precisely, we need $c_{n}$
to satisfy both the equation (\ref{eq:momenttail}) above and 
\[
c_{n}^{2}\left(\left|R_{n}^{q-1}\right|+\left|\partial R_{n}\right|\right)\ll n/\log n.
\]
Since $\mu$ is assumed to have finite second moment, we have that
$\delta_{r}=\sum_{|g|\ge r}|g|^{2}\mu(g)\to0$ as $r\to\infty$. Thus
we can choose $c_{n}$ to be 
\begin{equation}
c_{n}:=\sqrt{\delta_{\log n}\log n}.\label{eq:cn}
\end{equation}

\textbf{The lower bound for $\left|X_{n}\right|_{S}$:} given an $S$-path
connecting $id$ to $X_{n}$, we consider its restriction to each
$q$-full box $\alpha$ and use the uncrossing lemma \ref{uncross2}
to show that the restriction can be modified to provide paths that
write the configuration $U_{\alpha}$ in $\alpha$. The restriction
of an $S$-path to $\alpha$ means the steps of the path with at least
one of whose starting and ending points in $\alpha$. When the path
is restricted to a box $\alpha$, we obtain a collection $\mathcal{A}_{\alpha}$
of paths, each with projection to $\mathbb{Z}^{2}$ starting and ending
near the boundary of $\alpha$. More precisely, there is a constant
$K>0$ which only depends on the generating set $S$, such that the
starting and ending points of the paths in $\mathcal{A}_{\alpha}$
are within distance $K$ to the boundary of $\alpha$. Reversing the
directions of some paths if necessary, by concatenating paths we may
assume that in the collection $\mathcal{A}_{\alpha}$, two distinct
paths do not share common starting/ending points. In particular, the
number of paths in the collection is bounded by $K'c_{n}$, where
$K'$ is a constant only depending on $K$. For each path $P$ with
starting point $A$ and ending point $B$, let $A'$ ($B'$ resp.)
be the point on the boundary of $\alpha$ closest to $A$ ($B$ resp.).
Replace $P$ by the path $P'$ which is the concatenation of the shortest
$S$-path from $\left(A',{\bf 0}\right)$ to $\left(A,{\bf 0}\right)$,
the path $P$ and the shortest path $\left(B,\tau_{P}\right)$ to
$\left(B',\tau_{p}\right)$. Note that $|P'|\le|P|+2K$. Denote by
$\mathcal{A}_{\alpha}'$ the collection of modified paths obtained
from $\mathcal{A}_{\alpha}$. The purpose of making such modifications
is to assure that paths in the collection $\mathcal{A}_{\alpha}'$
start and terminate on the boundary of $\alpha$. Applying the uncrossing
Lemma \ref{uncross2} to the collection $\mathcal{A}_{\alpha}'$,
we obtain a new collection $\mathcal{B}_{\alpha}$ of paths without
essential crossings, such that $\mathcal{B}_{\alpha}$ produces the
lamp configuration as $\mathcal{A}_{\alpha}$; and moreover the total
length of paths in $\mathcal{B}_{\alpha}$ is bounded by 
\[
\sum_{Q\in\mathcal{B}_{\alpha}}|Q|\le\sum_{P'\in\mathcal{A}_{\alpha}'}|P'|+Cc_{n}\le\sum_{P\in\mathcal{A}_{\alpha}}|P|+C'c_{n},
\]
where $C$ and $C'$ are constants only depending on the generating
set $S$. Reversing the direction of some paths in $\mathcal{B}_{\alpha}$
if necessary (this is allowed because the lamp group is $\mathbb{Z}/2\mathbb{Z}$,
see Remark \ref{reverse}), we can connect the paths in $\mathcal{B}_{\alpha}$
along the perimeter of $\alpha$. This way we produce an $S$-path
which produces the configuration $\Phi_{n}|_{\alpha}$, and the length
of this path is bounded from above by $\sum_{P\in\mathcal{A}_{\alpha}}|P|+C''c_{n}$.
To produce the configuration $U_{\alpha}$, we continue with a path
which writes the difference $\Delta_{n}^{\alpha}$. Summing over the
$q$-full boxes, we obtain the lower bound 
\begin{equation}
|X_{n}|_{S}+\sum_{\alpha\ \mbox{is }q\mbox{-full}}\left(C''c_{n}+\ell_{{\rm TS}}^{(G,S)}(B_{n}^{\alpha})\right)\ge\sum_{\alpha\ \mbox{is }q\mbox{-full}}\ell_{{\rm TS}}^{(G,S)}(U_{\alpha}).\label{eq:lo}
\end{equation}

\textbf{End of the proof}: with the bounds (\ref{eq:up}) and (\ref{eq:lo}),
the choice of $c_{n}$ as in (\ref{eq:cn}), it suffices to show that
a.s., 
\begin{align*}
\lim_{n\to\infty}\frac{1}{n/\log n}\sum_{\alpha\ \mbox{is }q\mbox{-full}}\ell_{{\rm TS}}^{(G,S)}(U_{\alpha}) & =c\\
\lim_{n\to\infty}\frac{1}{n/\log n}\sum_{\alpha\ \mbox{is }q\mbox{-full}}\ell_{{\rm TS}}^{(G,S)}(B_{n}^{\alpha}) & =0.
\end{align*}
By the law of large numbers of arrays of independent random variables
to $\left\{ \ell_{{\rm TS}}^{(G,S)}(U_{\alpha})\right\} $ and $\left\{ \ell_{{\rm TS}}^{(G,S)}\left(B_{n}^{\alpha}\right)\right\} $,
the first limit follows from Lemma \ref{dilutedS1} and the second
limit follows from Lemma \ref{closeuniform} and Lemma \ref{multiscale}.

\end{proof}

\begin{remark}
The only place where we use the $(2+\epsilon)$-moment condition in the proof is in Flatto's result \cite{flatto}.
Recall that $T_n^{p}$ denotes the number of points visited at least once and at most $p$ times by the random walk on $\mathbb{Z}^2$ up to time $n$. We have shown that if we have a centered non-degenerate probability
measure $\mu$ of finite second moment on $G$, such that its projection to $\mathbb{Z}^2$ satisfies that for any $q\ge 2$,
\[
\limsup_{n\to\infty}\frac{\left|T_{n}^{q-1}\right|}{n/\log^{2}n}<\infty\mbox{ almost surely,}
\]
then the statement of Theorem \ref{main} would be true for $\mu$.
\end{remark}

\begin{remark}

For wreath product $\mathbb{Z}^{2}\wr F$ where $F$ is an arbitrary
non-trivial finite group, we say that a generating set $S$ is complete,
if for any $s=(z,f)\in S$, the element $(-z,\tau_{-z}f)$ is also
in the generating set $S$. When $F=\mathbb{Z}/2\mathbb{Z}$, the
completeness of a generating set $S$ is equivalent to symmetry of
$S$. The proof of Theorem \ref{main} extends to symmetric non-degenerate
random walks of finite $(2+\epsilon)$-moment on $\mathbb{Z}^{2}\wr F$,
when the length function $l_{S}$ is associated with a complete generating
set $S$.

\end{remark}

\begin{remark}

When the generating set $S$ of the wreath product $\mathbb{Z}^{2}\wr F$
is not complete, there are configurations where one cannot replace
an $S$-path by an $S$-path of comparable length reversing the starting
and ending points on $\mathbb{Z}^{2}$. For example, take $F=\mathbb{Z}/3\mathbb{Z}$
and the generating set $S$ which consists of $\left(e_{1},{\bf 0}\right),\left(e_{2},{\bf 0}\right)$,
$\left(e_{2},\delta_{0}^{1}\right)$ and their inverses. In a box
of side length $m$, consider the lamp configuration which is $1$
on one vertical line and $0$ everywhere else, see Figure \ref{notcomplete}.
Then for such a configuration, the optimal $S$-path whose projection
to $\mathbb{Z}^{2}$ travels from bottom to top (red to blue in the
picture) is of length $m$; while the optimal $S$-path which travels
from bottom to top is of length at least $2m$.

\begin{figure}
\includegraphics[scale=1.4]{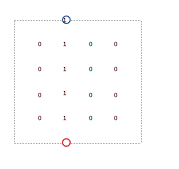} \caption{The picture illustrates in
$\mathbb{Z}^2\wr{\mathbb{Z}/3\mathbb{Z}}$ a configuration on a square that the shortest
length $S$-path traveling from bottom to top is much shorter than
an $S$-path from top to bottom.}
\label{notcomplete} 
\end{figure}
\end{remark}

\section{Wreath products with infinite lamp groups}

In this section we discuss some examples of wreath products with infinite
lamp groups. Consider the wreath product $B\wr H=\left(\oplus_{x\in B}H\right)\rtimes B$,
where both $B$ and $H$ are infinite. Let $P$ be a generating set
of $B$ and $T$ a generating set of $H$. We equip $B\wr H$ with
the generating set $S=\left\{ \left(s,id\right),s\in P\right\} \cup\left\{ \left(id_{B},\delta_{id_{B}}^{t}\right),t\in T\right\} $,
where $\delta_{id_{B}}^{t}:B\to H$ is the function that takes value
$t$ at $id_{B}$ and $id_{H}$ otherwise. Denote by $\left|\cdot\right|_{S}$
the word length on $B\wr H$ with respect to the generating set $S$.

Assume that $B$ is $\mathbb{Z}$ or $\mathbb{Z}^{2}$ and $H$ is
a finitely generated infinite group. Take a symmetric nondegenerate
probability measure $\nu$ of finite support on $B$ and a symmetric
nondegenerate probability measure $\eta$ on $H$. On the wreath product
$B\wr H$, take the switch-walk-switch distribution $\mu=\eta\ast\nu\ast\eta.$
In addition we assume that the drift of the $\eta$-random walk on
$H$ with respect to word length $\left|\cdot\right|_{T}$ has finite
second moment and satisfies 
\begin{equation}
\lim_{t\to\infty}\frac{L_{\eta}(t)}{t^{\alpha}}=c\label{eq:eta-c}
\end{equation}
for some constants $\alpha\in[1/2,1]$ and $c\in(0,\infty)$.

With respect to the generating set $S$ we have for a group element
$\left(z,f\right)\in B\wr H$, 
\begin{equation}
\sum_{x\in{\rm supp}(f)}|f(x)|_{T}\le|(z,f)|_{S}\le\sum_{x\in{\rm supp}(f)}|f(x)|_{T}+2|{\rm supp}f|+|z|_{P}.\label{eq:a0}
\end{equation}
Let $\left(X_{n}\right)_{n=0}^{\infty}$ be a random walk on $B\wr H$
with step distribution $\mu$ and write $X_{n}=(\bar{X}_{n},\Phi_{n})$.
Since the lamp group $H$ is assumed to be infinite and the rate of
escape of the $\eta$-random walk on $H$ satisfies (\ref{eq:a0}),
the length estimate (\ref{eq:a0}) shows that the main contribution
to $\left|X_{n}\right|_{S}$ is from $\sum_{x}|\Phi_{n}(x)|_{T}$.
Denote by $\left(Y_{t}^{x}\right)_{t=0}^{\infty}$ a collection of
independent $\eta$-random walks, indexed by $x\in B.$ Let $l(n,x)$
be the number of visits to $x\in B$ by the projected random walk
$\bar{X}=(\bar{X}_{n})_{n=0}^{\infty}$ on the base. Then $\left(\Phi_{n}(x)\right)_{x\in B}$
has the same distribution as $\left(Y_{l(n,x)}^{x}\right)_{x\in B}$.
In particular, conditioned on the local times $\left(l(n,x)\right)_{x\in B}$,
the random variables $\Phi_{n}(x)$, $x\in B$, are independent. Write
$L_{\eta}(t)=\mathbb{E}\left[|Y_{t}^{x}|_{T}\right]$ and $V_{\eta}(t)={\rm Var}\left(|Y_{t}^{x}|_{T}\right)$.
Then by the law of large numbers for independent random variables,
we have almost surely 
\begin{equation}
\lim_{n\to\infty}\frac{\sum_{x\in R_{n}}\left|Y_{l(n,x)}^{x}\right|_{T}}{\sum_{x\in R_{n}}l(n,x)^{\alpha}}\to c,\label{eq:as1}
\end{equation}
where $c$ is the limit of $L_{\eta}(t)/t^{\alpha}$, as assumed in
(\ref{eq:eta-c}). The problem is reduced to the expression $\sum_{x\in R_{n}}l(n,x)^{\alpha}$
of local times on the base.

For example, when $H=\mathbb{Z}$ and $\eta$ is a symmetric nondegenerate
probability measure on $\mathbb{Z}$ with finite second moment, then
the assumption (\ref{eq:a0}) is satisfied with $\alpha=1/2$ and
the variance satisfies $V_{\eta}(t)\simeq t$.

Now we consider an example with $\mathbb{Z}$ as a base group.

\begin{example}\label{1-dim infinite lamp}

Let $B=\mathbb{Z}$ and $H=\mathbb{Z}$. Take the probability measure
$\nu$ on $B$ to be uniform on $\{\pm1\}$ and take $\eta$ on $H$
to be uniform on $\left\{ \pm1\right\} $. On $\mathbb{Z}\wr\mathbb{Z},$
consider the random walk $\left(X_{n}\right)_{n=0}^{\infty}$ with
the standard SWS step distribution $\mu=\eta\ast\nu\ast\eta$. Then
by the invariance principle for local times of simple random walk
on $\mathbb{Z}$ due to Perkins \cite{perkins}, we have that 
\[
\frac{1}{n^{3/4}}\sum_{x\in\mathbb{Z}}l(n,x)^{1/2}\Rightarrow\int_{-\infty}^{\infty}\left(L_{1}^{x}\right)^{1/2}dx,
\]
where $L_{t}^{x}$ is the local time of the standard Brownian motion.

\end{example}

Next we return to the case of $\mathbb{Z}^{2}$ as a base group. Compared
to Theorem \ref{main}, it is easier to establish the law of large
numbers for random walks on the wreath product over $\mathbb{Z}^{2}$
with an infinite group as lamps. For a switch-walk-switch random walk
on $\mathbb{Z}^{2}\wr\mathbb{Z}$, the problem is reduced to functionals
of local times on $\mathbb{Z}^{2}$, where a result of \v{C}ern{\'y} \cite{cerny}
can be applied.

\begin{example}\label{2-dim Z-lamp}

Let $B=\mathbb{Z}^{2}$ and $H=\mathbb{Z}$. Let $\left(X_{n}\right)_{n=0}^{\infty}$
be a random walk on $\mathbb{Z}^{2}\wr\mathbb{Z}$ with switch-walk-switch
step distribution $\eta\ast\nu\ast\eta$, where $\eta$ is uniform
on $\{\delta^{-1},\delta\}$ and $\nu$ is uniform on $\{s_{1}^{\pm1},s_{2}^{\pm1}\}$.
Then there exists a constant $c\in(0,\infty)$ 
\[
\lim_{n\to\infty}\frac{|X_{n}|_{S}}{n/\log^{1/2}n}=c\mbox{ a.s.}
\]

\end{example}

\begin{proof}

By \v{C}ern{\'y}'s result \cite{cerny}, we know that there is a constant
$C>0$ such that almost surely 
\begin{equation}
\lim_{n\to\infty}\frac{\sum_{x\in R_{n}}l(n,x)^{1/2}}{n(\log n)^{-1/2}}=C.\label{eq:as2}
\end{equation}
The claim in the example follows from combining (\ref{eq:as1}) and
(\ref{eq:as2}), and the fact that the other two terms on the righthand
side of (\ref{eq:a0}) satisfy 
\begin{align*}
\left|{\rm supp}\Phi_{n}\right| & \le|R_{n}|\mbox{ and }\lim_{n\to\infty}|R_{n}|/n(\log n)^{-1/2}=0\ \mbox{a.s.},\\
 & \lim_{n\to\infty}|\bar{X}_{n}|/n(\log n)^{-1/2}=0\mbox{ a.s.}
\end{align*}

\end{proof}

\section{Controlled F{\o}lner pairs and limiting behavior of $l_{S}(X_{n})/L_{\mu}(n)$\label{sec:questions}}

In general $l_{S}(X_{n})/L_{\mu}(n)$ does not necessarily converge
in distribution. A strong opposite of concentration of $l_{S}(X_{n})$
around its mean is that $l_{S}(X_{n})/L_{\mu}(n)$ converges in distribution,
and the limiting distribution admits a density which is supported
on the whole ray $(0,\infty)$. In this section we consider the class
of groups which admits controlled F{\o}lner pairs.

\subsection{Questions}

F{\o}lner pairs were introduced in Coulhon, Grigoryan and Pittet
\cite{CGP} to produce lower bounds on return probability. We recall
the definition: a sequence $(F'_{n},F_{n})$ of pairs of finite subsets
of $G$ with $F'_{n}\subset F_{n}$ is called a sequence of \textit{F{\o}lner
pairs} adapted to an increasing function $\mathcal{V}(n)$ if there
is a constant $C<\infty$ and such that for all $n$, we have 
\begin{description}
\item [{(i)}] $\#F_{n}\le C\#F'_{n}$, 
\item [{(ii)}] $d(F'_{n},G\setminus F_{n})\ge n$, 
\item [{(iii)}] $\#F_{n}\le\mathcal{V}(Cn)$. 
\end{description}
F{\o}lner pairs provide lower bound for return probability $\mu^{(2n)}(e)$
for a symmetric probability measure $\mu$ of finite support on $G$,
see \cite{CGP}. The definition of controlled F{\o}lner pairs was
introduced by Tessera in \cite{tessera}. We say $(F'_{n},F_{n})$
with $F'_{n}\subset F_{n}$ is a sequence of \textit{controlled F{\o}lner
pairs} if they satisfy (i) and (ii) as above, and 
\begin{description}
\item [{(iii')}] $F_{n}\subset B(e,Cn)$. 
\end{description}
Note that it follows from definition that controlled F{\o}lner pairs
are adapted to the volume growth function of the group.

We ask the following question and in this section provide some evidence
supporting a positive answer.

\begin{question}\label{controlled-density}

Is it true that admitting controlled F{\o}lner pairs implies convergence
of $l_{S}(X_{n})/L_{\mu}(n)$ in distribution and the limiting distribution
admits a positive density on $(0,\infty)$?

\end{question}

In the context of the question, we mention the following asymptotic
properties of finitely generated groups. 
\begin{enumerate}
\item The F{\o}lner function is equivalent to the growth function $v_{S}(n)$. 
\item The group admits a sequence of F{\o}lner pairs $\left(F_{n}',F_{n}\right)$
with volume of $F_{n}$ equivalent to the growth function $v_{S}(n)$. 
\item The group admits controlled F{\o}lner pairs. 
\item The return probability $\mathbb{P}(X_{2n}=e)$ of simple random walk
to the origin is equivalent to the solution $\psi(n)$ of 
\[
\int_{1}^{1/\psi(t)}\frac{1}{sv_{S}^{-1}(s)^{2}}ds=t.
\]
\item Simple random walk on the group has diffusive rate of escape, that
is, $L_{\mu}(n)\simeq\sqrt{n}$. 
\item Simple random walk on the group is cautious (we will recall the definition
of cautiousness in Subsection \ref{subsec:controlled example}). 
\item The rate of escape is diffusive and the distribution of $|X_{n}|_{S}/L_{\mu}(n)$
converges in distribution to a limiting density whose support is $(0,\infty)$. 
\end{enumerate}
So far there is no known example that satisfies one of the conditions
in the list but not all the others. All known examples satisfying
these conditions are elementary amenable, in particular, there is
no known example of group of intermediate growth with properties in
the list. One can also ask when the rate of escape is diffusive, whether
the entropy of the random walk is equivalent to $\log v_{S}\left(\sqrt{n}\right)$.
Another question one can ask in the setting of condition 7 is whether
$|X_{n}|_{S}/L_{\mu}(n)$ exhibits Gaussian tail decay, that is, there
exists a constant $c>0$ such that 
\[
\mathbb{P}\left(|X_{n}|_{S}/L_{\mu}(n)\ge x\right)\le e^{-cx^{2}}\mbox{ for }x>1.
\]

We remark that properties in the list 1-6 describe some asymptotic
properties of the group that are minimal possible. Recall that by
\cite{LeePeres}, the rate of escape of simple random walk on an infinite
group has a diffusive lower bound. Property $5$ in the list describes
that the rate of escape is minimal possible in the sense that it is
equivalent to the universal lower bound. By the Coulhon-Saloff-Coste
inequality, the F{\o}lner function is at least the volume growth.
For example, on a group of exponential volume growth, the Coulhon-Saloff-Coste
inequality implies that the F{\o}lner function is at least exponential
and the return probability satisfies $\mathbb{P}(X_{2n}=e)\preceq e^{-n^{1/3}}$.
Property 1 and 4 describe that the F{\o}lner function (return probability
of simple random walk resp.) is minimal (maximal resp.) possible given
the volume growth. Properties 2 and 3 are strengthenings of Property
1, concerning not only the cardinalities of F{\o}lner sets, but also
the shape and geometry of them.

One class of groups where the positive answer to Question \ref{controlled-density}
is known consists of groups of polynomial growth. Note that groups
of polynomial growth admit controlled F{\o}lner pairs. As already
mentioned in the Introduction, on groups of polynomial growth, one
can apply the local limit theorem in Alexopoulos \cite{alexopoulos}.
More precisely, let $G$ be a group of polynomial growth and $\mu$
be a symmetric probability on $G$ with finite generating support.
Then \cite[Corollary 1.19]{alexopoulos} states that for any $\epsilon\in(0,1)$,
there is a constant $C>0$ such that 
\[
\left|\mu^{(n)}(x^{-1}y)-p_{n}^{H_{\mu}}\left(x,y\right)\right|\le Cn^{-\frac{1}{2}(D+\epsilon)}\exp\left(-\frac{l_{S}(x^{-1}y)^{2}}{Cn}\right),
\]
where $H_{\mu}$ is the limiting Laplacian on a nilpotent Lie group
$N$ which is referred to as the stratified Lie group associated to
$G$ in \cite{alexopoulos}. The stratified nilpotent group $N$ admits
a dilation $\delta:N\to N$ and the transition probabilities of the
limiting process is scale invariant with respect to the dilation:
$p_{st}^{H_{\mu}}\left(\delta_{\sqrt{s}}x,\delta_{\sqrt{s}}y\right)=p_{t}^{H_{\mu}}(x,y)$.
The local limit theorem implies that $l_{S}(X_{n})/\sqrt{n}$ converges
in distribution and the limiting distribution admits a positive density
on $\mathbb{R}_{+}$.

\subsection{The example of $\mathbb{Z}\wr F$}

In this subsection we explain the case of the lamplighter $\mathbb{Z}\wr F$
over $\mathbb{Z}$ with the lamp group $F$ a finite nontrivial group.
The group $\mathbb{Z}\wr F$ is an example of a group of exponential
growth that admits controlled F{\o}lner pairs. We show that for simple
random walk on $\mathbb{Z}\wr F$, the limiting distribution of $l_{S}(X_{n})/L_{\mu}(n)$
satisfies the claim of Question \ref{controlled-density}. Let $\left(B_{t}\right)_{t\ge0}$
be the standard Brownian motion and denote by $\mathcal{R}_{t}$ the
size of the range of the standard Brownian motion $\left(B_{t}\right)_{t\ge0}$
at time $t$.

\begin{proposition}\label{0-1 lamp}

Let $\mu$ be a non-degenerate symmetric measure of finite second
moment on $\mathbb{Z}\wr F$, $F$ is a finite group, $F\neq\left\{ id\right\} $.
Let $S$ be a finite symmetric generating set of $\mathbb{Z}\wr F$.
Then there exists constants $c_{1},c_{2}>0$ such that $l_{S}\left(X_{n}\right)/\sigma\sqrt{n}$
converges in distribution to $c_{1}\left(\mathcal{R}_{1}-|B_{1}|\right)+c_{2}|B_{1}|$,
where $\sigma$ is the variance of $\bar{X}_{1}$.

\end{proposition}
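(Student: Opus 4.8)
The plan is to reduce $l_S(X_n)$ to a one–dimensional covering–walk functional of the projected walk and then to apply the invariance principle. Write $X_n=(\bar X_n,\Phi_n)$, and for the projected walk $\bar X$ on $\mathbb{Z}$ set $M_n=\max_{0\le t\le n}\bar X_t$, $m_n=\min_{0\le t\le n}\bar X_t$ and $D_n=M_n-m_n$. Any word for $X_n$ projects to a cursor trajectory on $\mathbb{Z}$ that must visit every lit site and terminate at $\bar X_n$; conversely $X_n$ can be written by sweeping the cursor across the visited interval and switching lamps along the way. Since the minimal covering walk on $\mathbb{Z}$ that starts at $0$, covers an interval of diameter $D$ and ends at distance $r\le D$ from $0$ traverses a sub-interval of length $r$ exactly once and the complementary length $D-r$ exactly twice, I expect the two-sided estimate
\[
l_S(X_n)=c_2\,|\bar X_n|+c_1\bigl(D_n-|\bar X_n|\bigr)+o_{\mathbb P}(\sqrt n),
\]
where $c_2$ (resp. $c_1$) is the asymptotic $S$-cost, per unit length, of sweeping a long bulk segment once (resp. with a round trip) while writing the lamp configuration produced by the walk. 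The two constants differ precisely because a singly–traversed segment offers only one pass to set each lamp, while a doubly–traversed one offers two; in particular $c_1>c_2>0$, the gap $c_1-c_2$ being the cost of the extra traversal.

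The heart of the argument, and its main difficulty, is to establish this reduction for an arbitrary symmetric generating set $S$, for which the constants $c_1,c_2$ are emergent: a single generator may translate the cursor and flip several lamps at once, so travelling and lamp–switching cannot simply be priced separately. I would define $c_1,c_2$ as almost sure limits of the optimal $S$–cost per unit length of sweeping a segment of length $N$ whose lamps are i.i.d.\ uniform on $F$, the existence of these limits following from a subadditive (stable–norm) argument on $\mathbb{Z}\wr F$; symmetry of $S$ (equivalently completeness, cf. Remark \ref{reverse}) makes the rates for the two sweep directions coincide and lets one reverse subpaths freely. Two further points feed into the reduction. First, replacing $\operatorname{supp}\Phi_n$ by the full range is harmless: in any window of width $\varepsilon\sqrt n$ below $M_n$ (or above $m_n$) the chance that all lamps are off tends to $0$, so the extreme lit sites lie within $o_{\mathbb P}(\sqrt n)$ of $M_n,m_n$. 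Second, the bulk lamp values equidistribute to the uniform law on $F$, since interior local times tend to infinity and non-degeneracy forces the increments to generate $F$; the $o(\sqrt n)$ end sites of bounded local time contribute $o(\sqrt n)$ and are discarded. A matching upper bound (an explicit sweep) and lower bound (project any word to its cursor trajectory, count traversal multiplicities, and bound below by the per–unit rates) then give the displayed estimate.

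Given the reduction, the limit is immediate from the invariance principle. As $\mu$ has finite second moment, its projection $\bar\mu$ is centred, symmetric and non-degenerate on $\mathbb{Z}$ with $\sigma^2=\operatorname{Var}(\bar X_1)<\infty$, so Donsker's theorem gives $\bigl(\bar X_{\lfloor nt\rfloor}/(\sigma\sqrt n)\bigr)_{t\in[0,1]}\Rightarrow(B_t)_{t\in[0,1]}$, standard Brownian motion. Since $B\mapsto\max_{[0,1]}B$, $B\mapsto\min_{[0,1]}B$ and $B\mapsto B_1$ are continuous for the uniform topology, we obtain jointly
\[
\Bigl(\tfrac{D_n}{\sigma\sqrt n},\ \tfrac{\bar X_n}{\sigma\sqrt n}\Bigr)\ \Longrightarrow\ \bigl(\mathcal R_1,\ B_1\bigr),
\]
where $\mathcal R_1=\max_{[0,1]}B-\min_{[0,1]}B$ is the range of Brownian motion at time $1$.

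Combining the estimate with this convergence, and using that $D_n-|\bar X_n|$ and $|\bar X_n|$ are continuous functionals of the rescaled path, I conclude
\[
\frac{l_S(X_n)}{\sigma\sqrt n}\ \Longrightarrow\ c_1\bigl(\mathcal R_1-|B_1|\bigr)+c_2\,|B_1|,
\]
with $c_1>c_2>0$ as above, which is the claimed limiting law. The one genuinely technical step is the word–length reduction of the first two paragraphs—pinning down the emergent constants $c_1,c_2$ for a general generating set with $o(\sqrt n)$ error; the invariance principle and the equidistribution of bulk lamps are comparatively standard.
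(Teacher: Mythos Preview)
Your approach matches the paper's: define $c_1,c_2$ as subadditive TSP limits on i.i.d.\ uniform lamp configurations (this is Lemma~\ref{sub-additive-1d}), establish $l_S(X_n)\sim c_1(R_n-|\bar X_n|)+c_2|\bar X_n|$, and then apply Donsker's invariance principle. One small correction: your appeal to completeness and Remark~\ref{reverse} is specific to $F=\mathbb{Z}/2\mathbb{Z}$, but in one dimension no reversal is needed---successive excursions of the projected path into a long sub-interval automatically share endpoints (the walk cannot exit on the left and re-enter on the right without crossing the interval), so they concatenate in temporal order and the argument works for any finite $F$; the paper carries out the reduction by subdividing $\mathbb{Z}$ into intervals of length $c_n$ and importing the machinery from the proof of Theorem~\ref{main}.
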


\begin{proof}

We first explain a particular case which is easier. Consider the SWS
generating set $S_{0}$ which consists of elements of the form $\left(e_{1},\delta_{0}^{\gamma}+\delta_{1}^{\gamma'}\right)$,
where $\gamma,\gamma'\in F$, and their inverses. Take the random
walk step distribution $\mu_{0}$ to be uniform on $S_{0}$. Recall
that for an element $(x,f)\in(\mathbb{Z}/2\mathbb{Z})\wr\mathbb{Z}$,
for the SWS generating set $S_{0}$, we have 
\[
l_{S_{0}}\left(\left(x,f\right)\right)=2\max\left\{ {\rm supp}f\right\} -2\min\left\{ {\rm supp}f\right\} -|x|,
\]
if $x\in\left[\min\left\{ {\rm supp}f\right\} ,{\rm max}\left\{ {\rm supp}f\right\} \right]$.
Denote by $R_{n}$ the range of the projected $\mu_{0}$-random walk
on $\mathbb{Z}$ at time $n$. Then there is a constant $C>0$, almost
surely for all $n$ sufficiently large, 
\begin{equation}
\left|l_{S}\left(X_{n}\right)-\left(2|R_{n}|-|\bar{X}_{n}|\right)\right|\le C\log n.\label{eq:LR}
\end{equation}
In \cite{jainpruitt}, a central limit theorem for $|R_{n}|$ is shown:
$|R_{n}|/\sqrt{n}$ converges to $\mathcal{R}_{1}$ in distribution.
This follows from Donsker's invariance principle, and similarly one
can show that $\left(2|R_{n}|-|\bar{X}_{n}|\right)/\sqrt{n}$ converges
to $2\mathcal{R}_{1}-|B_{1}|$ in distribution. Denote by $f_{\infty}$
the density of the random variable $2\mathcal{R}_{1}-|B_{1}|$. Thus
by (\ref{eq:LR}), we have that the limiting density of $l(X_{n})/\sqrt{n}$
is $f_{\infty}$.

Next we consider the general case. We argue in a way similar to the
proof of Theorem \ref{main}, except that in the one-dimensional case
the uncrossing lemma is not needed. Note the following consequence
of sub-additivity.

\begin{lemma}\label{sub-additive-1d}

Let $\mathcal{\Lambda}_{n}$ be a configuration uniform on $\oplus_{z\in\left\{ 1,\ldots,n\right\} }(F)_{z}$.
Denote by ${\rm TSP}_{S}\left(\Lambda_{n};x,y\right)$ the shortest
$S$-path that writes $\Lambda_{n}$ whose projection in $\mathbb{Z}$
starts at $x$ and ends at $y$. Then there are constants $c_{1},c_{2}>0$
such that almost surely 
\begin{align*}
\lim_{n\to\infty}\frac{1}{n}{\rm TSP}_{S}\left(\Lambda_{n};1,1\right) & =c_{1},\\
\lim_{n\to\infty}\frac{1}{n}{\rm TSP}_{S}\left(\Lambda_{n};1,n\right) & =c_{2}.
\end{align*}

\end{lemma}

\begin{proof}{[}Proof of Lemma \ref{sub-additive-1d}{]}

Let $\Lambda$ be the i.i.d. uniform configuration on $F^{\mathbb{Z}}.$
Given two integers $x\le y$, denote by $\Lambda_{[x,y]}$ the restriction
of $\Lambda$ to the interval $\{x,\ldots,y\}$. Then there is a constant
$C$ which depends only on $S$ such that for any $x\le y\le z$,
we have 
\begin{equation}
{\rm TSP}_{S}\left(\Lambda_{[x,z]};x,x\right)\le{\rm TSP}_{S}\left(\Lambda_{[x,y]};x,x\right)+{\rm TSP}_{S}\left(\Lambda_{[y,z]};y,y\right)+C.\label{eq:1d-sub1}
\end{equation}
Indeed if we have paths $P_{1}$ of optimal length starting and ending
at $x$ producing $\Lambda_{[x,y]}$, and $P_{2}$ of optimal length
starting and ending at y producing $\Lambda_{[y,z]}$, then we may
obtain the following path which writes $\Lambda_{[x,z]}$. Follow
$P_{1}$, which starts at $x$, until the first time it exits the
interval $\left[x,y\right]$, call this segment of $P_{1}$ as $P_{1}'$.
Denote by $y'$ the end point of $P_{1}'$, it is within bounded distance
$C_{0}$ to $y$. Then to write $\Lambda_{[x,z]}$, we first follow
$P_{1}'$, then move from $y'$ to $y$ without changing the configuration,
then follow $P_{2}$, move back from $y$ to $y'$, and finally follow
the rest of $P_{1}$. The inequality (\ref{eq:1d-sub1}) follows.

By concatenating $S$-paths, we have that 
\begin{equation}
{\rm TSP}_{S}\left(\Lambda_{[x,z]};x,z\right)\le{\rm TSP}_{S}\left(\Lambda_{[x,y]};x,y\right)+{\rm TSP}_{S}\left(\Lambda_{[y,z]};y,z\right)+C.\label{eq:1d-sub2}
\end{equation}
Given (\ref{eq:1d-sub1}) and (\ref{eq:1d-sub2}), the statement follows
from Kingman's subadditive ergodic theorem.

\end{proof}

Now we return to the proof of Proposition \ref{0-1 lamp}. We show
that there exist constants $c_{1},c_{2}>0$ (these will be the constants
in Lemma \ref{sub-additive-1d}) such that almost surely 
\begin{equation}
\lim_{n\to\infty}\frac{l_{S}\left(X_{n}\right)}{c_{1}(R_{n}-|\bar{X}_{n}|)+c_{2}|\bar{X}_{n}|}=1,\label{eq:1d-conv}
\end{equation}
where $\bar{X}_{n}$ is the projection of $X_{n}$ to $\mathbb{Z}$
and $\left|\bar{X}_{n}\right|$ is its absolute value.

The proof of (\ref{eq:1d-conv}) is similar to that of Theorem \ref{main},
except that the $1$-dimensional case is easier and does not require
the uncrossing lemma (while the uncrossing lemma is essential in the
$2$-dimensional case in Theorem \ref{main}). Given any constant
$\epsilon>0$, at time $n$ choose $c_{n}$ and divide $\mathbb{Z}$
into intervals of length $c_{n}$. Note that by \cite[Theorem 21]{DGK}
(see also the remark after it), we have that for any $\delta>0$,
almost surely 
\[
\frac{\left|\partial R_{n}\right|}{|R_{n}|}=o\left(n^{-\frac{1}{2}+\delta}\right).
\]
We proceed in the same way as in the proof of Theorem \ref{main}.
In the upper bound direction, to obtain an $S$-path we concatenate
paths lying inside each interval and connect disjoint intervals to
the rest of the intervals if necessary. In this way we obtain an upper
bound for $l_{S}(X_{n})$ analogous to (\ref{eq:up}). In the lower
bound direction, we start with an optimal $S$-path connecting id
to $X_{n}$, then up to bounded error, in each interval it induces
a TSP path either of the first or second kind, see Figure \ref{finitelamps}.
In this way we obtain a lower bound analogous to (\ref{eq:lo}). The
convergence to the limit as in (\ref{sub-additive-1d}) is proved
in the same way as in Theorem \ref{main}.

\begin{figure}
\includegraphics[scale=0.5]{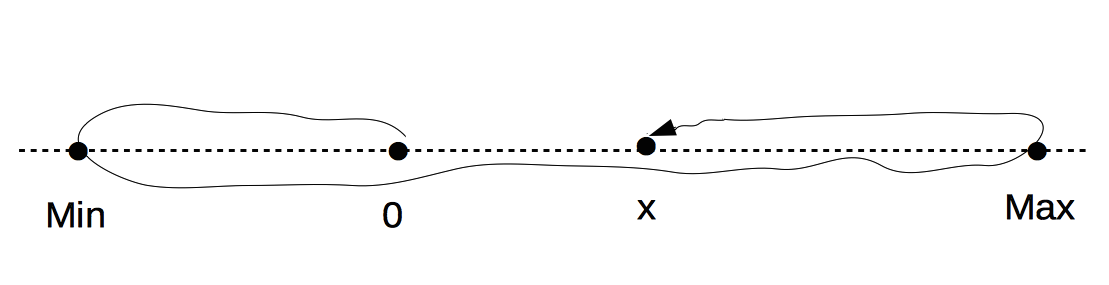} \caption{Given a prescribed configuration on $[\min,\max]$ and end point $x_{n}$,
on the subintervals $[min,0]$ and $[x,max]$ we have TSP problem
of the first kind; on the interval $[0,x]$ we have TSP problem of
the second kind. This results in the two constants $c_{1},c_{2}$.}
\label{finitelamps} 
\end{figure}
Similar to the SWS random walk case, as a consequence of Donsker's
invariance principle, the expression $c_{1}(R_{n}-|\bar{X}_{n}|)+c_{2}|\bar{X}_{n}|$
normalized by $\sigma\sqrt{n}$, converges in distribution to $c_{1}\left(\mathcal{R}_{1}-|B_{1}|\right)+c_{2}|B_{1}|$.
Combined with (\ref{eq:1d-conv}), we conclude that $l_{S}(X_{n})/\sigma\sqrt{n}$
converges in distribution to $c_{1}\left(\mathcal{R}_{1}-|B_{1}|\right)+c_{2}|B_{1}|$.

\end{proof}

\subsection{A partial result in the general case of groups admitting controlled
F{\o}lner pairs\label{subsec:controlled example}}

In this subsection we discuss some general properties of groups admitting
controlled F{\o}lner pairs.

If a group $G$ admits a sequence of controlled F{\o}lner pairs,
then simple random walk on $G$ is diffusive, that is, $L_{\mu}(n)\simeq\sqrt{n}$,
see \cite[Theorem 1.4]{PeresZheng} (the statement is formulated in terms
of $\ell^{2}$-isoperimetric profile, by \cite{CGP} controlled F{\o}lner
pairs imply that the $\ell^{2}$-isoperimetric profile in balls satisfy
$\lambda\left(B(id,r)\right)\lesssim r^{-2}$). As introduced in \cite{erschlerozawa},
the random walk $(X_{n})_{n=0}^{\infty}$ is \emph{cautious} if for
any constant $c>0$, there exists a constant $\delta=\delta(c)>0$,
such that $\mathbb{P}(l_{S}(X_{n})\le c\sqrt{n})\ge\delta(c)$. In
particular, if the random walk $(X_{n})_{n=0}^{\infty}$ is \emph{cautious},
then $l_{S}(X_{n})/L_{\mu}(n)$ does not converge in probability.
Admitting a sequence of controlled F{\o}lner pairs implies that simple
random walks are cautious, see \cite[Lemma 4.5]{EZ17}.

The following lemma shows that in the context of Question \ref{controlled-density},
if the limiting distribution exists, then it must be supported on
the whole ray $\left(0,\infty\right)$.

\begin{lemma}\label{cautious-density}

Let $G$ be a group where the $\mu$-random walk on $G$ is cautious.
Suppose in addition that $l_{S}(X_{n})/\sqrt{n}$ converges in distribution
to a limiting distribution $\nu$. Then for any $(a,b)\subseteq[0,\infty)$,
we have $\nu\left(\left(a,b\right)\right)>0$.

\end{lemma}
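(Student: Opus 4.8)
The plan is to prove the equivalent assertion that the distribution function $F(x):=\nu([0,x])$ is strictly increasing on $[0,\infty)$, which is exactly the statement $\nu((a,b))>0$ for all $0\le a<b$. First I would record what the hypotheses give for free. By cautiousness $\mathbb{P}(l_S(X_n)\le c\sqrt n)\ge\delta(c)$ for every $n$, so evaluating at a continuity point of $F$ and using $l_S(X_n)/\sqrt n\Rightarrow\nu$ gives $F(c)\ge\delta(c)>0$ for all $c>0$; thus $0\in\operatorname{supp}\nu$ and $\nu$ charges every $[0,c]$. Cautiousness also implies that $l_S(X_n)/L_\mu(n)$ does not converge in probability (as noted earlier in the text), and since convergence in distribution to a point mass would force convergence in probability, $\nu$ is not a Dirac mass. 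The task then splits into showing (a) $F$ has no flat piece strictly below $M:=\sup\operatorname{supp}\nu$, and (b) $M=\infty$.

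The only trajectory-level input is that word length changes by a bounded amount per step, $|l_S(X_{m+1})-l_S(X_m)|\le K:=\max_{s\in S}l_S(s)$. Writing $\rho_m:=l_S(X_m)/\sqrt m$, a one-line computation gives $|\rho_{m+1}-\rho_m|\le K/\sqrt m+\rho_m/(2m)$, so while $\rho_m$ stays bounded its increments are $O(1/\sqrt m)\to0$. Hence $\rho$ cannot jump across a band: any passage from $\{\rho\le a\}$ to $\{\rho\ge b\}$ proceeds through a run of consecutive times $m$ with $\rho_m\in(a,b)$. This is the mechanism I would use to manufacture mass inside a putative gap.

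For (a), suppose for contradiction that $\nu((a,b))=0$ with $0\le a<b<M$; shrinking the band I may take $a,b$ to be continuity points and fix $a'\in(a,b)$, and $\nu((b,\infty))>0$. I would first produce a crossing with uniformly positive probability: cautiousness gives $\mathbb{P}(\rho_n\le a')\ge\delta'>0$, while the Markov property expresses the increment from time $n$ to $Kn$ as an independent copy of $X_{(K-1)n}$, and a triangle-inequality bound shows that if this increment is long (normalised value $\ge\theta_K$, with $\theta_K\downarrow b$ as $K\to\infty$) then $\rho_{Kn}\ge b$. For $K$ large $\theta_K$ lies below the mass $\nu$ carries above $b$, so by independence $\mathbb{P}(\rho_n\le a',\,\rho_{Kn}\ge b)\ge c_2>0$, and on this event $\rho$ performs a full crossing of $(a',b)$ inside $[n,Kn]$. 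The crucial point is then that this crossing is \emph{slow}: since $\mathbb{E}[l_S(X_s)]=L_\mu(s)\asymp\sqrt s$, a maximal inequality should show that increasing the length by $(b-a')\sqrt n$ needs $\gtrsim n$ steps with high probability, so $\rho$ sojourns in $(a',b)$ for $\ge\epsilon n$ consecutive instants. Then $\sum_{t=n}^{Kn}\mathbb{P}(\rho_t\in(a',b))=\mathbb{E}\,\#\{t\in[n,Kn]:\rho_t\in(a',b)\}\ge\epsilon n\,c_2$, whereas $\nu((a',b))=0$ forces this sum to be $o(n)$; comparing, some deterministic $t$ has $\mathbb{P}(\rho_t\in(a',b))\ge\epsilon c_2/K>0$, a contradiction.

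The step I expect to be the main obstacle is precisely the slow-crossing estimate. The bounded-increment bound by itself only forces a sojourn of order $\sqrt n$, which is too weak — it is absorbed by the $o(n)$ on the right-hand side — so one genuinely needs the diffusive maximal inequality, together with a careful treatment of the dependence between the crossing and the stopping times $\lambda=\sup\{m\le\sigma:\rho_m\le a'\}$ and $\sigma=\inf\{m\ge n:\rho_m\ge b\}$ that delimit one clean excursion in the band. The second delicate point is (b), $M=\infty$: because sub-additivity only transports mass downward, unboundedness of the support does not follow from cautiousness alone, and I would deduce it from the additional diffusive and anticoncentration features available in the controlled F\o{}lner setting (in the concrete examples it is visible directly, the limits being Brownian functionals such as $c_1(\mathcal{R}_1-|B_1|)+c_2|B_1|$ with unbounded support). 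With (a) and (b) established, $F$ is strictly increasing on $[0,\infty)$ and the claim follows.
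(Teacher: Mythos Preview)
Your proposal has two genuine gaps, both of which you essentially flag yourself.

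\textbf{Part (b) is unproven.} You write that ``unboundedness of the support does not follow from cautiousness alone'' and defer to the controlled F{\o}lner setting or to concrete examples. But the lemma is stated under cautiousness alone, so this is a real hole. In fact cautiousness \emph{does} imply $M=\infty$, and the paper obtains it via a structural input you are missing: by a result of Erschler--Ozawa, a cautious random walk forces $G$ to admit a virtual homomorphism onto $\mathbb{Z}$. Passing to the induced walk on a finite-index subgroup and projecting to $\mathbb{Z}$, the classical CLT gives $\nu((x,\infty))>0$ for every $x>0$. Without this ingredient your argument cannot close.

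\textbf{The slow-crossing estimate is not established.} As you correctly observe, the bounded-increment bound alone gives only a sojourn of order $\sqrt{n}$ in the gap, which is too weak. You appeal to ``a diffusive maximal inequality'' and $L_\mu(s)\asymp\sqrt{s}$, but neither is among the hypotheses: convergence in distribution of $l_S(X_n)/\sqrt{n}$ does not imply $L_\mu(n)\asymp\sqrt{n}$ without uniform integrability, and no maximal inequality of the required strength is supplied. So the contradiction in (a) is not actually derived.

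The paper's route bypasses the crossing mechanism entirely with a short scaling trick. Given $0<a<b$, first pick $(A,B)$ with $A>b$, $|B-A|<|b-a|/2$ and $\nu((A,B))>0$ (possible because $\nu((x,\infty))>0$). Choose $c\in(0,1)$ and $\varepsilon>0$ so that $(A\sqrt{c}-\varepsilon,\,B\sqrt{c}+\varepsilon)\subset(a,b)$. Writing $k=cn$ and using independence of $X_k$ and $X_k^{-1}X_n$ together with the triangle inequality,
\[
\mathbb{P}\bigl(|X_n|_S\in(a\sqrt{n},b\sqrt{n})\bigr)\ \ge\ \mathbb{P}\bigl(|X_k|_S\in(A\sqrt{k},B\sqrt{k})\bigr)\cdot\mathbb{P}\bigl(|X_{n-k}|_S\le\varepsilon\sqrt{n-k}\bigr),
\]
and letting $n\to\infty$ gives $\nu((a,b))\ge\nu((A,B))\,\nu((0,\varepsilon))>0$, the second factor being positive by cautiousness. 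This avoids any occupation-time or maximal-inequality argument.
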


\begin{proof}

The cautiousness condition implies that for any $\varepsilon>0$,
$\nu((0,\varepsilon))>0$. By \cite{erschlerozawa}, since the $\mu$-random
walk $\left(X_{n}\right)_{n=0}^{\infty}$ on $G$ is cautious, $G$
admits a virtual homomorphism onto $\mathbb{Z}$. Denote by $G_{1}$
a finite index subgroup of $G$ with a homomorphism $\pi:G_{1}\to\mathbb{Z}$
that is onto. Consider the induced random walk on $G_{1}$: let $\tau_{0}=0$,
$\tau_{k}=\min\{n>\tau_{k-1}:X_{n}\in G_{1}\}$, and $Y_{k}=X_{\tau_{k}}$.
By the law of large numbers, we have that $\tau_{k}/k\to\mathbb{E}[\tau_{1}]$
almost surely when $k\to\infty$. By the central limit theorem on
$\mathbb{Z}$, we have that $\pi\left(X_{\tau_{k}}\right)/\sqrt{n}$
converges to normal distribution $N(0,\sigma^{2})$ in distribution.
Note that there is a constant $c$, which only depends on $G_{1}$
and the generating set $S$, such that $\left|X_{\tau_{k}}\right|_{S}\ge c\left|\pi\left(X_{\tau_{k}}\right)\right|$.
Therefore for any $x>0$, we have 
\begin{align*}
\lim_{n\to\infty}\mathbb{P}\left(\frac{\left|X_{n}\right|_{S}}{\sqrt{n}}\ge x\right) & =\lim_{k\to\infty}\mathbb{P}\left(\frac{\left|X_{\tau_{k}}\right|_{S}}{\sqrt{k\mathbb{E}\left[\tau_{1}\right]}}\ge x\right)\\
 & \ge\lim_{n\to\infty}\mathbb{P}\left(\frac{c\left|\pi\left(X_{\tau_{k}}\right)\right|}{\sqrt{k\mathbb{E}\left[\tau_{1}\right]}}\ge x\right)\\
 & =2\Phi\left(\frac{\mathbb{E}\left[\tau_{1}\right]^{1/2}}{c\sigma}x\right),
\end{align*}
where $\Phi(x)=\mathbb{P}(Z>x)$, $Z$ the standard normal random
variable. In particular, it follows that for any $x>0$, $\nu\left(\left(x,\infty\right)\right)>0$.

Given $0<a<b<\infty$, choose an interval $(A,B)$ such that $A>b$,
$|B-A|<|b-a|/2$ and $\nu\left(\left(A,B\right)\right)>0$. Such an
interval $(A,B)$ exists because $v((x,\infty))>0$ for all $x>0$.
Next, choose a constant $c\in(0,1)$ and a sufficiently small $\varepsilon>0$
such that 
\[
\left(A\sqrt{c}-\varepsilon,B\sqrt{c}+\varepsilon\right)\subseteq\left(a,b\right).
\]
For example, one can choose $c$ to satisfy $A\sqrt{c}=a+\frac{1}{3}(b-a)$
and take $0<\varepsilon<\frac{1}{3}(b-a)$. By the triangle inequality,
we have for $k=cn$, 
\[
\mathbb{P}\left(\left|X_{n}\right|{}_{S}\in\left(a\sqrt{n},b\sqrt{n}\right)\right)\ge\mathbb{P}\left(\left|X_{k}\right|_{S}\in\left(A\sqrt{k},B\sqrt{k}\right)\right)\cdot\mathbb{P}\left(\left|X_{n-k}\right|_{S}\le\varepsilon\sqrt{n-k}\right).
\]
Then in the limit we have 
\[
v\left(\left(a,b\right)\right)\ge\nu((A,B))\nu((0,\epsilon))>0.
\]

\end{proof}

\begin{remark}

As explained before the statement of Lemma \ref{cautious-density},
admitting controlled F{\o}lner pairs implies that simple random walk
on $G$ is cautious. Thus in the context of Question \ref{controlled-density},
if the limiting distribution exists and admits a density, then the
support of the density must be the whole ray $(0,\infty)$.

\end{remark}

\subsection{Dependence of the limiting distribution on subsequences}

For groups which admit controlled F{\o}lner pairs only along a subsequence,
the situation is more complicated. In this subsection we explain an
example of a group $G$ such that for a simple random walk $\mu$
on $G$, along two difference subsequences, the limiting distributions
of $l_{S}(X_{n})/L_{\mu}(n)$ are different. Denote by $f_{\infty}$
the density function of $c_{1}\left(\mathcal{R}_{1}-|B_{1}|\right)+c_{2}|B_{1}|$
as in Proposition \ref{0-1 lamp}.

\begin{proposition}\label{lacunary}

There exists a lacunary hyperbolic group $G$, which is locally-nilpotent-by-$\mathbb{Z}$,
such that along one subsequence $(n_{i})$, $l_{S}(X_{n_{i}})/L_{\mu}(n_{i})$
converges in distribution to the limiting density $f_{\infty}$; and
along another subsequence $(m_{i}),$ $l_{S}(X_{m_{i}})/L_{\mu}(m_{i})$
converges in probability to constant $1$.

\end{proposition}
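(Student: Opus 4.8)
The plan is to produce $G$ as a direct limit of hyperbolic groups, in the spirit of the Olshanskii--Osin--Sapir construction of lacunary hyperbolic groups, tuned so that two incompatible geometries coexist at different scales: on one family of scales the group is isometric, in large balls, to the lamplighter $\mathbb{Z}\wr F$, while on another family the rescaled geometry is an $\mathbb{R}$-tree. Concretely I would take $G=\varinjlim G_j$, where each $G_j$ is word hyperbolic and $G_{j+1}$ is obtained from $G_j$ by imposing relations at a scale $m_j$, with the injectivity radius of $G_j\to G$ growing fast enough to open a lacuna. The building blocks are arranged so that, for a scale $L_i\ll m_i$, every ball of radius $L_i$ about the identity in $G$ is isometric to the corresponding ball in $\mathbb{Z}\wr F$, and so that the surjection $\pi\colon G\to\mathbb{Z}$ survives in the limit with \emph{locally nilpotent} kernel; this keeps $G$ locally-nilpotent-by-$\mathbb{Z}$ even though the intermediate $G_j$ need not be amenable. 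The scaling sequence $(m_i)$ sitting in the lacuna is exactly one along which the asymptotic cone is a tree. Establishing the existence of such a $G$ with \emph{all} of these properties at once is, in my view, the main obstacle of the proof, and where the freedom in the growth of the injectivity radii is spent.

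The second preparatory step is to translate geometric scales into random-walk times. Since the typical displacement of $X_n$ is of order $L_\mu(n)$, and since $L_\mu$ itself oscillates between the diffusive lamplighter regime and the tree regime along the lacunary structure, I would select $(n_i)$ so that $L_\mu(n_i)\ll L_i$ (the walk stays, with probability tending to $1$, inside a region isometric to a ball of $\mathbb{Z}\wr F$), and select $(m_i)$ so that at time $m_i$ the projected walk genuinely explores the tree scale. Matching these two confinement conditions simultaneously is the delicate bookkeeping part; it should follow from the diffusive lower bound for displacement together with moment estimates controlling the probability that the walk leaves the intended region.

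On the lamplighter scales I would condition on the confinement event $\{X_t\ \text{stays in the }\mathbb{Z}\wr F\text{-isometric ball for all }t\le n_i\}$, whose probability tends to $1$ by the choice $L_\mu(n_i)\ll L_i$. On this event the word length $l_S(X_{n_i})$ and the law of the confined walk agree with those of the simple random walk on $\mathbb{Z}\wr F$, so the one-dimensional analysis of Proposition \ref{0-1 lamp} applies verbatim and gives convergence in distribution of $l_S(X_{n_i})/L_\mu(n_i)$ to the density $f_\infty$. This step is essentially a coupling on a high-probability confinement event followed by a citation of the established lamplighter computation.

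On the tree scales the mechanism is different. Here the base seen at scale $m_i$ is tree-like, the projected walk is transient at this scale, and its range grows essentially linearly with strong concentration around its mean. Through the Steiner/TSP interpretation of the word length of a lamplighter element, $l_S(X_{m_i})$ is comparable, up to the confinement error, to the size of the range, so concentration of the range forces $l_S(X_{m_i})/L_\mu(m_i)\to 1$ in probability. The subtlety is that $G$ is only lacunary hyperbolic, not hyperbolic, so the Benoist--Quint or Mathieu--Sisto concentration theorems do not apply directly; instead I would prove concentration of the range by a subadditive/martingale argument using only the tree-like structure present at the single relevant scale $m_i$, together with the F{\o}lner-type control of boundary contributions exactly as in the proof of Theorem \ref{main}. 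Combining the lamplighter-scale and tree-scale analyses yields the two distinct limiting behaviors along $(n_i)$ and $(m_i)$, completing the proof.
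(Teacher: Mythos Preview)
Your overall architecture---build $G$ as a direct limit so that balls of $G$ alternately coincide with balls of $\mathbb{Z}\wr F$ and with balls of a hyperbolic group, then transfer the two distinct limit laws through a confinement/coupling argument---is correct and is exactly what the paper does. The lamplighter-scale half of your argument (confine to the $\mathbb{Z}\wr F$-isometric ball with probability $\to 1$, then quote Proposition~\ref{0-1 lamp}) is fine.

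The tree-scale half, however, has a genuine gap. You write that at scale $m_i$ ``the projected walk is transient \ldots\ and its range grows essentially linearly'', and then you want to pass from concentration of the range to concentration of $l_S(X_{m_i})$ via the TSP/Steiner interpretation and the F{\o}lner-type boundary control used in Theorem~\ref{main}. But the only projection available on a locally-nilpotent-by-$\mathbb{Z}$ group is $\pi:G\to\mathbb{Z}$, and the induced walk on $\mathbb{Z}$ is a centered walk, hence recurrent with range of order $\sqrt{n}$; there is no transience and no linear range to concentrate around. The tree-like geometry at scale $m_i$ lives in $G$ itself, not in a base group, so the lamplighter/TSP dictionary simply does not apply at those scales.

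The paper's mechanism at the tree scales is different and more direct. In its construction the hyperbolic approximant $\hat{\Gamma}_{i}$ is an HNN-extension of a finite nilpotent group, hence virtually free; on such a group the $\mu$-walk has \emph{linear} drift, and Kingman's subadditive ergodic theorem gives $l_S(X_n)/L_\mu(n)\to 1$ almost surely on $\hat{\Gamma}_i$. One then chooses the lacuna so that the ball of the relevant radius in $G$ is isometric to the ball in $\hat{\Gamma}_i$, and the concentration transfers to $G$ at times $m_i$ via the same confinement argument you used on the lamplighter side. No range or TSP reasoning is needed (or available) here. Your sketch would be repaired by replacing the range/TSP paragraph with this subadditive-ergodic argument on the virtually free approximant; correspondingly, your description of the construction should make explicit that the hyperbolic stages are virtually free (so that linear drift and Kingman apply), and that the other stages are finite extensions of $\mathbb{Z}\wr(\mathbb{Z}/p\mathbb{Z})$ with kernel of diameter negligible compared to the scale, rather than literally $\mathbb{Z}\wr F$.
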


\begin{proof}

Consider the locally-nilpotent-by-$\mathbb{Z}$ groups which are constructed
in \cite{gromoventropy}. We recall the construction: take a prime
number $p$ and denote by ${\bf M}$ the free product $\ast_{i\in\mathbb{Z}}\left\langle a_{i}\right\rangle $,
where $a_{i}$ satisfies $a_{i}^{p}=1$, $i\in\mathbb{Z}$. Take a
non-decreasing sequence ${\bf c}=\left(c_{i}\right)_{i=1}^{\infty}$
of positive integers and denote by $A(p,\mathbf{c})$ the quotient
group of $\mathbf{M},$ subject to the relations that for each $n\in\mathbb{N}$,
\[
\left[\ldots\left[a_{i_{0}},a_{i_{1}}\right],\ldots,a_{i_{c_{n}}}\right]=1,\ \mbox{where }\max_{j,k}\left|i_{j}-i_{k}\right|\le n.
\]
Denote by $G(p,{\bf c})$ the semi-product $A(p,{\bf c})\rtimes\left\langle t\right\rangle $,
where $t$ acts by the automorphism $a_{i}\to a_{i+1}$, $i\in\mathbb{Z}$,
on $A(p,{\bf c})$.

In terminology introduced later, with appropriate choice of $\mathbf{c}$,
the group $G(p,{\bf c})$ is lacunary hyperbolic, see \cite[Section 3.5]{olshosinsapir}.
As considered in \cite{EZ17}, one can impose additional relators
in $G(p,{\bf c})$ such that the group is close to the lamplighter
$\mathbb{Z}\wr(\mathbb{Z}/p\mathbb{Z})$ along another subsequence.
Let $\ensuremath{G_{0}=(\mathbb{Z}/p\mathbb{Z})\wr\mathbb{Z}}$ and
$\mathbf{M}=\ast_{i\in\mathbb{Z}}\left\langle b_{i}\right\rangle $.
We define a sequence of quotients of $G(p,{\bf c})$ recursively as
follows, parametrized by the sequence $(\ell_{i},c_{i},k_{i})_{i\in\mathbb{N}}$.

After we have defined $G_{i}$, given the parameter $\ell_{i+1}$,
take the nilpotent subgroup in $G_{i}$ generated by $b_{0},\ldots,b_{2^{\ell_{i+1}}-1}$,
\[
N_{i+1}=\left\langle b_{0},\ldots,b_{2^{\ell_{i+1}}-1}\right\rangle .
\]
Consider the quotient group $\hat{M}_{i+1}$ of $M$ defined by imposing
the relations that for any $j$, the subgroup generated by $\left\langle b_{j},\ldots,b_{j+2^{\ell_{i+1}}-1}\right\rangle $
is isomorphic to $N_{i+1}$ (isomorphism given by shifting indices
by $j$). Let $\hat{\Gamma}_{i+1}=\hat{M}_{i+1}\rtimes\mathbb{Z}$
be the cyclic extension of $\hat{M}_{i+1}$. Then $\hat{\Gamma}_{i+1}$
splits as an HNN-extension of the finite nilpotent group $N_{i+1}$,
therefore $\hat{\Gamma}_{i+1}$ is virtually free.

Given the parameters $c_{i+1},k_{i+1}\in\mathbb{N}$, consider the
quotient group $G_{i+1}=\bar{\Gamma}_{i+1}(c_{i+1},k_{i+1})$ of $\hat{\Gamma}_{i+1}$
subject to additional relations $(\ast)$ 
\begin{align*}
\left[\left[\left[b_{j_{1}},b_{j_{2}}\right],b_{j_{3}}\right],\ldots,b_{j_{m}}\right] & =0\mbox{ for any }m\ge c_{i+1},\\
b_{j} & =b_{j+2^{k_{i+1}}}\mbox{ for all }j\in\mathbb{Z}.
\end{align*}

By construction, if we choose $\ell_{i+1}\gg\max\left\{ \ell_{i},k_{i},c_{i}\right\} $
and $k_{i+1},c_{i+1}\gg\ell_{i+1}$ to be large enough parameters,
then we have that $G_{i+1}$ and $G_{i}$ coincide on the ball of
radius $2^{\ell_{i+1}}$ around the identity element, and moreover,
$G_{i+1}$ and $\hat{\Gamma}_{i+1}$ coincide on the ball of radius
$2^{k_{i+1}}$.

On the group $\mathbf{M}\rtimes\mathbb{Z}$, denote by $\mu$ the
uniform measure on $\left\{ t^{\pm1},b_{0}^{\pm1}\right\} $. In what
follows, on a quotient group of $\mathbf{M}\rtimes\mathbb{Z}$, we
consider the random walk with step distribution the projection of
$\mu$.

We now specify the choice of parameters $(\ell_{i},k_{i},c_{i})$.
Fix a sequence of positive numbers $\left(\epsilon_{i}\right)_{i=1}^{\infty}$
such that $\epsilon_{i}$ converges to $0$ when $i\to\infty$. Suppose
$G_{i}$ is defined and we choose $\ell_{i+1},k_{i+1}$ and $c_{i+1}$.
By its definition $G_{i}$ is a finite extension of $(\mathbb{Z}/p\mathbb{Z})\wr\mathbb{Z}$.
More precisely, let $H_{i}$ be the subgroup of $G_{i}$ generated
by $b_{0},\ldots,b_{2^{k_{i}}-1}$. Note that $H_{i}$ is a finite
nilpotent group. Then because of the relations $(\ast)$ in $\bar{\Gamma}_{i}$,
$G_{i}$ fits into the exact sequence 
\[
1\to[H_{i},H_{i}]\to G_{i}\to(\mathbb{Z}/p\mathbb{Z})\wr\mathbb{Z}\to1.
\]

More precisely, one can recursively choose intervals $I_{k},J_{k}$
in $\mathbb{N}$ with $\max I_{k}<\min J_{k}$ and groups $H_{k},G_{k}$
such that

(i) $H_{k}$ is virtually free (and it only depends on $G_{1},H_{1},\ldots,G_{k-1},H_{k-1},G_{k}$)
and for an integer $r\in J_{k}$, the ball of radius $r$ in $H_{k}$
around $id$ coincides with the ball of the same radius around $id$
in $G_{k}.$

(ii) for an integer $r\in I_{k}$, the ball of radius $r$ in $G_{k}$
is the same as the ball of same radius in $H_{k-1}$; and $G_{k}$
is a finite extension of $\mathbb{Z}\wr(\mathbb{Z}/2\mathbb{Z})$
where the extension only depends on choices up to index $k-1$.

Let $G$ be the direct limit of the sequence of groups $\{G_{k}\}.$
In the construction one can take the intervals $I_{k},J_{k}$ to be
arbitrarily long. By the subadditive theorem, on the virtually free
group $H_{k-1}$, we have $|X_{n}^{(k-1)}|_{S}/L_{(k-1)}(n)\to1$
almost surely as $n\to\infty$. Thus we can choose $I_{k}$ to be
long enough such that almost surely for all $n\ge m(I_{k})$, $\left||X_{n}^{(k-1)}|_{S}/L_{(k-1)}(n)-1\right|<\epsilon_{k}$.
To choose $J_{k}$, we make sure that it is sufficiently long such
that diameter of kernel of $G_{k}\to\mathbb{Z}\wr(\mathbb{Z}/2\mathbb{Z})$
is less than $\epsilon_{k}m(J_{k})$.

It follows that if the intervals $I_{k},J_{k}$ are sufficiently long,
in the direct limit $G$, along one subsequence of time, the random
walk sees the virtually free group; and along another subsequence
of time it sees the wreath product $\mathbb{Z\wr\mathbb{Z}}/2\mathbb{Z}$.

\end{proof}

\bibliographystyle{plain}
\bibliography{concentrationref}

\end{document}